\newtheorem{theorem}{Theorem}[section]
\newtheorem{corollary}[theorem]{Corollary}
\newtheorem{lemma}[theorem]{Lemma}
\newtheorem{remark}{Remark}[section]
\newproof{proof}{Proof}
\newcommand{\Eqlab}[1]{\label{eq:#1}}%
\newcommand{\Eqref}[1]{\eqref{eq:#1}}%
\def\bl{\bigl}%
\def\br{\bigr}%
\def\Bl{\Bigl}%
\def\Br{\Bigr}%
\def\Cov{{\sf Cov}}%
\def\Var{{\sf Var}}%
\def\ie{i.e.}%
\def\eg{e.g.}%
\newcommand{\DF}{\,\stackrel{\sf def}{=}\,}%
\newcommand{\one}{\hbox{\rm 1\kern-.27em I}}%
\def\BbbR{\ensuremath{\mathbb{R}}}%
\def\bfC{\ensuremath{\mathbf{C}}}%
\def\bfP{\ensuremath{\mathbf{P}}}%
\def\calA{\ensuremath{\mathcal{A}}}%
\def\calB{\ensuremath{\mathcal{B}}}%
\def\calH{\ensuremath{\mathcal{H}}}%
\def\calI{\ensuremath{\mathcal{I}}}%
\def\calJ{\ensuremath{\mathcal{J}}}%
\def\calN{\ensuremath{\mathcal{N}}}%
\def\sfE{\ensuremath{\mathsf{E}}}%
\def\sfL{\ensuremath{\mathsf{L}}}%
\def\sfP{\ensuremath{\mathsf{P}}}%
\def\sfQ{\ensuremath{\mathsf{Q}}}%
\def\sfR{\ensuremath{\mathsf{R}}}%
\def\sfT{\ensuremath{\mathsf{T}}}%
\def\sfp{\ensuremath{\mathsf{p}}}%
\def\sfq{\ensuremath{\mathsf{q}}}%
\def\eps{\varepsilon}%
\def\vfi{\varphi}%
\newcommand{\bs}[1]{\boldsymbol{#1}}%
\def\bvfi{\boldsymbol{\varphi}}%
\def\xiL{\xi_{\sfL}}%
\def\xiR{\xi_{\sfR}}%
\newcommand{\calINxi}[1]{\calI_{N,#1}^{\,\boldsymbol{\xi}}}%
\newcommand{\bfPNxi}[1]{\sfP_{N,#1}^{\,\boldsymbol{\xi}}}%
\newcommand{\calBnu}{\calB^{\kern1pt\nu}_u}%
\newcommand{\theN}{\theta_N}%
\journal{Stochastic Processes and their Applications}
\begin{document}

\begin{frontmatter}

\title{Some Rigorous Results on Semiflexible Polymers\\I. Free and confined polymers}

\author[Durham]{O.~Hryniv}
\ead{Ostap.Hryniv@durham.ac.uk}

\author[Geneva]{Y.~Velenik\corref{cor}\fnref{fn1}}
\fntext[fn1]{Supported in part by the Swiss National Science Foundation}
\ead{Yvan.Velenik@unige.ch}

\cortext[cor]{Corresponding author}

\address[Durham]{Department of Mathematical Sciences, University of Durham,
Science Laboratories, South Rd, Durham DH1~3LE, UK}

\address[Geneva]{Section de Math\'ematiques, Universit\'e de Gen\`eve, 2-4, rue du Li\`evre, 1211 Geneva 4, Switzerland}

\begin{abstract}
We introduce a class of models of semiflexible polymers. The latter are characterized by a strong rigidity, the correlation length associated to the gradient-gradient correlations, called the persistence length, being of the same order as the polymer length.

We determine the macroscopic scaling limit, from which we deduce bounds on the free energy of a polymer confined inside a narrow tube.
\end{abstract}

\begin{keyword}
semiflexible polymer \sep functional CLT \sep confinement

\MSC 60K35 \sep 82B41
\end{keyword}

\end{frontmatter}

\section{Introduction and results}
The purpose of the present work is to introduce and study a family of effective models of semiflexible polymers. The latter are polymers endowed with two characteristic properties: 1) Their thermal fluctuations are governed by their \emph{bending energy}, rather than their tension; in other words, they try to minimize curvature rather than length. 2) Their \emph{persistence length}, which can be roughly defined as the correlation length associated to the directional correlations between tangent vectors to the polymer, is of a size comparable to that of the polymer. Such semiflexible polymers play a crucial role in nature. In particular, the biological function of many biopolymers (such as DNA, filamentous actin or microtubules) relies on their semiflexibility, the latter providing considerable mechanical rigidity.

\subsection{The model}\label{sec:model}
The model most often used in the physics literature is the so-called \emph{wormlike chain}. In this model, the polymer is described by a smooth path in $\mathbb{R}^2$ (higher dimensions are of course possible, but we'll stick to this case in this paper), of fixed length $1$, which we'll write $\mathbf{r}(s)$ with $s$ being the parametrization such that the tangent vector $\mathbf{t}(s)=\mathrm{d}\mathbf{r}/\mathrm{d}s$ satisfies $\|\mathbf{t}(s)\|=1$ for all $s$. The energy functional associated to the path is then given by
$$
\int_0^1 \Phi\Bigl(\Bigl\|\frac{\mathrm{d}\mathbf{t}(s)}{\mathrm{d}s}\Bigr\|\Bigr)\,\mathrm{d}s,
$$
where $\Phi$ is usually taken as $\Phi(x)=\kappa x^2$, the parameter $\kappa$ setting the rigidity of the polymer. When the polymer makes only small deviations from the horizontal axis, an effective representation of the polymer as the graph of a function $f:\mathbb{R}\to\mathbb{R}$ becomes possible, associating to a given polymer configuration $f$ the energy
$$
\int_0^c\Phi\bl(f''(x)\br)\,dx,
$$
where $c$ is the macroscopic length of the polymer~\cite{twB93}. 

The main aim of the  present paper is to study discrete approximations to such models. Namely, we consider lattice configurations $\bvfi$ in the ensemble
$$
\calI_N^\xi \DF\Bl\{\bvfi = (\vfi_0=0,\vfi_1=\xi,\dots,\vfi_{N},\vfi_{N+1})\in\mathbb{Z}^{N+2}\Br\},
$$
equipped with the probability measure
\begin{equation}
\label{eq:free.measure}
\sfP_N^\xi(\bvfi)\propto
\exp\Bl\{-\calH_N\bl(\bvfi\br)\Br\}\,,
\end{equation}
where the Hamiltonian $\calH_N(\bvfi)$ is defined by
\begin{equation}
\label{eq:Ham}
\calH_N(\bvfi)\DF\eps\sum_{j=1}^{N}\Phi\bl(\eps^{-1}\Delta\vfi_j\br)\,.
\end{equation}
Here we assume that the parameters $N$ and $\eps$ satisfy $N\eps\to c$ as $N\to\infty$, where $c>0$ denotes the macroscopic length of the polymer, and use the standard notation for the lattice difference operators
\begin{equation}
\Eqlab{grad.def}
(\nabla\vfi)_k\DF\vfi_k-\vfi_{k-1}\,,\qquad
\Delta\vfi_k\DF(\nabla\bl(\nabla\vfi))_{k+1}
\equiv\vfi_{k+1}-2\vfi_k+\vfi_{k-1}\,.
\end{equation}
Similarly, if the gradient condition on the right end of the polymer becomes improtant, we consider the ensemble $\calINxi d$ (with $\boldsymbol{\xi}=\{\xiL,\xiR\}$ and $d=d_{N+1}$) of configurations with fixed endpoints and fixed gradients at both extremities,
\begin{align}
\label{eq:calInxio.def}
\calINxi d&\DF\Bl\{\bvfi = (\vfi_0=0,\dots,\vfi_{N+1})\,:\\
&\hspace{2cm}\vfi_1=\xiL,\vfi_{N}=\xiR+d_{N+1},\vfi_{N+1}=d_{N+1}\Br\}\nonumber\\
&\,\equiv\Bl\{\bvfi = (\vfi_0=0,\dots,\vfi_{N+1})\,:\nonumber\\
&\hspace{2cm}\nabla\vfi_1=\xiL,\nabla\vfi_{N+1}=-\xiR,\vfi_{N+1}=d_{N+1}\Br\}\nonumber
\end{align}
equipped with the probability measure
\begin{equation}
\label{eq:pinned.measure}
\bfPNxi d(\bvfi)\DF \sfP_N^{\xiL}(\bvfi \,|\, \bvfi\in\calINxi d)\,.
\end{equation}
Our aim is to describe the typical behaviour of the trajectories
$\bvfi\in\calINxi d$ under the measure $\bfPNxi d(\cdot)$ from
\Eqref{pinned.measure} with $N\eps\approx c$ and $\eps\ll 1$. Despite the Hamiltonian \Eqref{Ham} might look unusual, our results in App.~\ref{sec:heuristic} show that this choice corresponds to the semiflexibility regime, when the persistence length and the polymer length are of the same order. In addition, our results in Sect.~\ref{sec:scaling} below (see, eg., Remark~\ref{rem:theta-fclt} and Remark~\ref{rem:Gauss}) show that the Hamiltonian \Eqref{Ham} with $\Phi(x) \sim x^2$ as $x\sim0$ is essentially the only sensible choice from the physical point of view.

\bigskip
Of course, this model shares the limitations of the macroscopic effective model it approximates: It forbids backtracks of the polymer, and the gradients of $\varphi$ have to remain close to zero. For the questions we have in mind, these approximations will be harmless.

Let us nevertheless mention that it is also possible to discretize directly the wormlike chain, thus obtaining discrete models of semiflexible polymers that are better suited to discuss other properties (for example the effect of an external force pulling the polymer, which in general results in a non-trivial macroscopic profile). In particular, there is a natural discrete variant of the wormlike chain, in which the polymer is modelled as a chain of hard rods of unit-length, with an energy penalizing changes of orientation. This model might also be amenable to a mathematical analysis, although this would surely generate additional technicalities.

To our knowledge, the mathematical analysis of models of the form introduced above is still quite limited. The works closest to ours are~\cite{CaDe-I, CaDe-II}, in which the effect of an external pinning potential, similar to the problem we analyse in Part II~\cite{HrVe_PartII}, is done in the case $\eps=1$, as $N\to\infty$, both with and without a positivity constraint. In particular, it is shown that such models display a very different critical behaviour from that for interfaces or polymers with tension. Notice however, that setting $\eps=1$ and taking $N$ to infinity implies that the described polymer is not semiflexible anymore (its persistence length being of the order of the lattice spacing, while its length becomes infinite).

Other relevant works deal with the case of membranes, a natural higher-dimensional analogue of the one-dimensional polymer considered here. These models have also important applications, as they can be used to describe, \eg, cell membranes. However, their rigorous analysis is quite involved, and up to now only the case of objects of internal dimension at least $4$ have been successfully studied; see \cite{K08} and references therein.

We finally observe that to simplify our exposition we only discuss discrete height models. Similar results can also be obtained by analogous methods for continuous height models; of course, there one has to understand the RHS of \eqref{eq:free.measure} and of similar expressions as the densities w.r.t.\ the Lebesgue measure. The key ingredient of our analysis--the local limit theorem--remains the same. We refer the interested reader to the classical monograph \cite{P75}, Chap.~VII of which deals with LLT's both in discrete and continuous setting.

\section{Scaling properties of semiflexible polymers}\label{sec:scaling}
\subsection{Reduction to the RW case}

The problem above can be reduced to a problem about random walks. To
do this, consider the process
\begin{equation}
\label{eq:xi-walk.def}
\xi_k\DF(\nabla\vfi)_k\,,\qquad\xi_1\equiv\xiL\,,
\end{equation}
and observe that, w.r.t.\ the distribution from \Eqref{free.measure}-\Eqref{Ham}, its rescaled
increments
\[
\eta_k\DF \eps^{-1}\Delta\vfi_k\equiv \eps^{-1}\nabla\xi_{k+1}=\eps^{-1}(\xi_{k+1}-\xi_k)
\] 
are i.i.d.\ random variables. We need to study the large-$N$ behaviour of such random walks conditioned on the event $\xi_N\equiv-\xiR$. Since in view of \Eqref{xi-walk.def}
\begin{equation}
\label{eq:xi-eta.relation}
\xi_m=\xi_1+\sum_{j=2}^m\nabla\xi_j\equiv\xi_1+\eps\sum_{j=1}^{m-1}\eta_j\,,
\end{equation}
the ``gradient'' boundary condition in \Eqref{calInxio.def} reads
\begin{equation}
\label{eq:grad.bc}
\eps\sum_{j=1}^{N}\eta_j=\xi_{N+1}-\xi_1\equiv\nabla\vfi_{N+1}-\nabla\vfi_1\,.
\end{equation}
Now, using the relation \Eqref{xi-eta.relation}, we get
\begin{equation}
\label{eq:vfi-eta.relation}
\vfi_k=\vfi_1+\sum_{m=2}^k\xi_m
=k\xi_1+\eps\sum_{j=1}^{k-1}\bl(k-j\br)\eta_j
\end{equation}
and rewrite the ``right-end'' boundary condition from
\Eqref{calInxio.def} as 
\begin{equation}
\label{eq:pinning.bc}
\eps\sum_{j=1}^{N}\bl(N+1-j\br)\eta_j=\vfi_{N+1}-(N+1)\xi_1\equiv\vfi_{N+1}-(N+1)\nabla\vfi_1\,.
\end{equation}

Clearly, the problem above now reads as the problem of describing
conditional distribution of a RW with i.i.d.\ steps $\eta$, subject
to constraints \Eqref{grad.bc} and~\Eqref{pinning.bc}.
Questions of this type are well understood, see \cite{rDoH96} for a
recent treatment of a similar model, so we can simply state the corresponding answers and discuss the necessary modifications in the proofs.

\subsection{Functional CLT}\label{sec:FCLT}

We now turn to the analysis of the fluctuations of the process. In view of the applications we have in mind, and the intrinsic limitations of this model, we shall restrict attention to ``macroscopically bounded'' boundary conditions (see Theorem~\ref{thm:theta-fclt} below).

\bigskip
For $k=1,\dots,N$, we consider
\begin{equation}
\label{eq:XYk.def}
X_k\DF\sum_{j=1}^k\eta_j\,,\qquad 
Y_k\DF\frac1{N+1}\sum_{j=1}^k\bl((k+1)-j\br)\eta_j\,.
\end{equation}
Our basic assumption is (remember that $N=c/\epsilon$) that the first two moments of $\eta$ satisfy
\footnote{
Here and below we use $\sfE$, $\Var$ and $\Cov$ to denote the expectation, the variance and the covariance of various random variables expressed in terms of the $\eta$-variables. We stress that with fixed value of the first gradient $\xi$ and fixed law of the i.i.d.\ increments $\eta$, the probability measure $\sfP_N^\xi(\bvfi)$ becomes uniquely defined.
}
\begin{equation}
\label{eq:eta-varaibles-variance}
\sfE\eta=0\,,\qquad \sfE \eta^2=\sigma_N^2,\qquad\lim_{N\to\infty} N \sigma_N^2 =\infty.
\end{equation}
It implies, for all $m=1,2,\dots,N$,
\begin{equation}
\label{eq:XY-moments}
\begin{gathered}
 \sfE X_m=\sfE Y_m=0\,,\quad \Var X_m=m\sigma_N^2\,,
\\[1ex]
\sfE \bl(X_mY_m\br)=\frac{m(m+1)}{2(N+1)}\sigma_N^2\,,\quad \Var Y_m=\frac{m(m+1)(2m+1)}{6(N+1)^2}\sigma_N^2\,;
\end{gathered}
\end{equation}
in particular, the vector $Z_N\DF(X_N,Y_N)$ has zero mean and the covariance matrix
\begin{equation}
\label{eq:Z-Cov}
\Cov\bl(Z_N\br)=\begin{pmatrix} N\sigma_N^2 & \frac{N}2\sigma_N^2 \\[1ex] \frac{N}2\sigma_N^2 & \frac{N(2N+1)}{6(N+1)}\sigma_N^2\end{pmatrix}\,.
\end{equation}

We are going to study the asymptotics of the conditional process $\theN(t)$, $t\in[0,1]$, related to the one-point projections
\[
 \bl(Y_k\mid X_N=a_N,Y_N=b_N\br)
\]
with $a_N$, $b_N$ chosen in such a way that the probability of the condition
\[
 \sfP\bl(X_N=a_N,Y_N=b_N\br)
\]
remains positive for all $N$ large enough and, for some finite $K>0$,
\[
 \limsup_{N\to\infty}\frac{|a_N|+|b_N|}{\sigma_N\sqrt{N}}<K\,.
\]
More precisely, for $t\in[0,1]$ let
\begin{equation}
\label{eq:Nt-def}
N_t\DF\bl[Nt\br]\,,
\end{equation}
and define the continuous process $\theN(t)$ via
\begin{equation}
\label{eq:thetaN-process}
\theN\Bl(\frac mN\Br)=\frac1{\sigma_N\sqrt{N}}\bl(Y_m\mid X_N=a_N,Y_N=b_N\br)
\end{equation} 
at the points $t=m/N\in[0,1]$ with subsequent linear interpolation for other values of $t\in[0,1]$. Our main result reads as follows:

\begin{theorem}\label{thm:theta-fclt}\sl
Let the independent random variables $\eta$ have common distribution with variance $\sigma_N^2$ satisfying $N\sigma_{N}^2\to\infty$ as $N\to\infty$. If
\[
 \lim_{N\to\infty}\frac{a_N}{\sigma_N\sqrt{N}}=a\,,\qquad \lim_{N\to\infty}\frac{b_N}{\sigma_N\sqrt{N}}=b\,,
\]
then the distribution of $\theN(t)$ converges weakly in $\bfC[0,1]$ to that of a Gaussian process $\theta(t)$, $t\in[0,1]$, such that for all $s$, $t$ with $0\le s\le t\le1$,
\begin{equation}
\label{eq:theta-moments}
\begin{gathered}
\sfE\theta(t)=t^2(t-1)a+t^2(3-2t)b\,,\\[1ex]
 \Cov\bl(\theta(s),\theta(t)\br)=\frac{s^2(1-t)^2}6\Bl(2t(1-s)+t-s\Br)\,.
\end{gathered}
\end{equation}
In particular, for $a=b=0$ we get
\[
 \theta(t)\sim \calN\bl(0,\tfrac13 t^3(1-t)^3\br)\,,\quad t\in[0,1]\,.
\]
\end{theorem}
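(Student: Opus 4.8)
The plan is to establish the functional CLT by combining a finite-dimensional CLT with a tightness argument, exploiting the linear structure that reduces everything to conditioned sums of i.i.d.\ increments.

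The strategy proceeds in several steps. First I would establish convergence of the finite-dimensional distributions of $\theN$. Fix $0\le t_1<\dots<t_\ell\le1$ and consider the random vector $(Y_{N_{t_1}},\dots,Y_{N_{t_\ell}},X_N,Y_N)$. Each coordinate is a linear combination of the $\eta_j$'s by \Eqref{XYk.def}, so this is a linear image of the i.i.d.\ array $(\eta_1,\dots,\eta_N)$; under the moment condition \Eqref{eta-varaibles-variance} and a Lindeberg-type condition (automatic when $N\sigma_N^2\to\infty$ for triangular arrays with a common distribution), the multivariate CLT gives joint asymptotic normality after rescaling by $\sigma_N\sqrt N$. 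The limiting covariances follow by taking $N\to\infty$ in the explicit formulas \Eqref{XY-moments}, using $N_{t}/N\to t$; for instance the unconditioned limit of $\mathrm{rescaled}\,(Y_{N_s},Y_{N_t})$ has covariance obtained from $\Var Y_m$ and the analogous cross term. Having the joint Gaussian limit of $(Y_{N_{t_i}}, X_N, Y_N)$, I extract the \emph{conditional} law given $X_N=a_N$, $Y_N=b_N$ via the standard Gaussian conditioning formula: the conditional mean is linear in $(a,b)$ and the conditional covariance is the Schur complement. Using \Eqref{Z-Cov} for the covariance of $Z_N=(X_N,Y_N)$ and the limiting cross-covariances between $Y_{N_t}$ and $(X_N,Y_N)$, one computes the mean and covariance and should recover exactly \Eqref{theta-moments}. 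Here the rigorous passage from joint weak convergence to convergence of the conditional law requires care, since conditioning on an event of vanishing probability is not continuous; one properly invokes a \emph{local} limit theorem (as the authors flag, citing \cite{P75}) so that the conditional densities converge, not merely the joint laws.

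Second I would prove tightness of $\{\theN\}$ in $\bfC[0,1]$. Because $\theN$ is piecewise linear, it suffices to control increments at the grid points. The natural route is a Kolmogorov-type moment bound: estimate $\sfE\bl[(\theN(t)-\theN(s))^4\bl|\,X_N=a_N,Y_N=b_N\br]$ and show it is $O(|t-s|^2)$ uniformly in $N$. Since $Y_m-Y_{m'}$ is again a linear combination of the $\eta_j$'s, its conditional fourth moment can be bounded using the conditional second moments (from the Gaussian limit, with corrections controlled by the LLT) together with a uniform integrability input on $\eta^2/\sigma_N^2$. Alternatively one controls the modulus of continuity directly through the conditional variance increments implied by \Eqref{theta-moments}, which are $O(|t-s|)$, upgraded to a fourth-moment bound via the near-Gaussian structure.

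The main obstacle, and the step deserving the most care, is the interaction between conditioning and the degenerate nature of the constraint: the event $\{X_N=a_N,Y_N=b_N\}$ has probability decaying like $N^{-1}(\sigma_N\sqrt N)^{-2}$, so ordinary weak convergence of the joint law is insufficient and a two-dimensional local limit theorem for the lattice-valued vector $Z_N$ is essential, both to justify positivity of the conditioning probability and to transfer the fidi convergence and the tightness bounds to the conditional measure. Establishing this LLT uniformly enough (in particular handling the lattice structure of $\xi$ and the scaling $N\sigma_N^2\to\infty$) is where the genuine work lies; once it is in place, the Gaussian conditioning computation and the moment bound for tightness are essentially routine linear-algebra and fourth-moment estimates. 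The special case $a=b=0$ then follows by setting $s=t$ in \Eqref{theta-moments}, giving $\Var\theta(t)=\tfrac13 t^3(1-t)^3$.
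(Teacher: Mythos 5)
Your plan for the finite-dimensional part is essentially the paper's: a multivariate CLT for the linear images $(Y_{N_{t_1}},\dots,Y_{N_{t_{k+1}}},X_N)$ of the $\eta$-array, Gaussian conditioning via the Schur complement (which indeed reproduces \Eqref{theta-moments}), and a local limit theorem to make the conditioning on $\{X_N=a_N,Y_N=b_N\}$ legitimate. But two points there are not as innocent as you state. First, the Lindeberg condition is \emph{not} automatic here: the common law of $\eta$ varies with $N$ (it depends on $\eps=c/N$), so $N\sigma_N^2\to\infty$ does not by itself give uniform integrability of $(\eta/\sigma_N)^2$ along the array; the paper instead works directly with the product of characteristic functions and needs the expansion \Eqref{chi-smallU-expansion} to hold with uniform control in $N$ — some such hypothesis must be made explicit. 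Second, you correctly identify the LLT as ``where the genuine work lies'' and then leave it entirely as a black box. The paper's actual labour is precisely the verification of the off-line property for the product $\prod_{j=1}^N\chi\bl(u_N^k(j)/(\sigma_N\sqrt N)\br)$: one must show that for $\|\bs{\bar u}\|$ large, sufficiently many members of the piecewise-linear sequence $u_N^k(j)$ exceed $\zeta\sigma_N$ in modulus, which is the content of the lemma following Theorem~\ref{thm:FCLT}; a proof that omits this has omitted the core of the argument rather than a routine step.

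The tightness step is where your proposal genuinely fails under the stated hypotheses. The theorem assumes only a second moment for $\eta$, so the Kolmogorov fourth-moment bound you propose is simply unavailable; and the suggested repair — upgrading to fourth moments ``via the near-Gaussian structure'' — is not an argument, since moments of the limiting process do not control prelimit conditional moments. Fourth moments are also unnecessary: because $\theN(\tfrac mN)-\theN(\tfrac{m-1}N)=X_m/\bl(\sigma_N\sqrt N\,(N+1)\br)$, the single estimate $\sfE\bl(X_k^2\bigm|X_N=a_N,Y_N=b_N\br)\le C_1\sigma_N^2N$, uniformly in $k$ (the paper's Lemma~\ref{lem:increment-tightness}, proved by splitting $k>\sqrt N$, handled via a CLT for the triple $(X_k,X_N,Y_N)$, from $k\le\sqrt N$, handled via conditional second moments of individual steps plus Cauchy--Schwarz), yields
\[
\sfE\bl|\theN(t)-\theN(s)\br|^2\le C|t-s|^2\,,
\]
and a second-moment bound with exponent $2>1$ already gives tightness in $\bfC[0,1]$. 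Note also that your claim that the conditional variance increments are $O(|t-s|)$ undersells the process — they are $O(|t-s|^2)$ in $L^2$, reflecting the $\bfC^1$ paths of $\theta$ — and an $O(|t-s|)$ second-moment bound would in any case be insufficient for tightness. Finally, transferring even these second-moment bounds to the conditioned measure again requires the uniform LLT, so the gap in the first paragraph propagates here as well.
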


\begin{remark}\label{rem:theta-fclt}\rm
\begin{enumerate}
\item In a sense, the main message of the above result is that under sufficiently mild assumptions (i.i.d.\ increments $\eta$ with variance $\sigma^2_N$ satisfying $N\sigma_{N}^2\to\infty$ as $N\to\infty$) the only physically relevant potentials $\Phi$ for the model at hand are convex potentials of the Gaussian type, $\Phi(x)\sim\kappa x^2/2$ as $x\to0$.
Indeed, for every model satisfying Theorem~\ref{thm:theta-fclt}, there exists a {\it mesoscopic\/} scale $\delta=\delta_N\to0$ such that $N_\delta=c/\delta\to\infty$ will still satisfy the condition $N_\delta\sigma^2_N\to\infty$. As a result, it is possible to discretize our macroscopic polymer so that its behaviour on the scale $\delta$ is approximately Gaussian. Consequently, among various a priori legitimate choices $\Phi(x)\sim|x|^\alpha$, the Gaussian case $\Phi(x)\sim|x|^2$, popular in physics literature, seems most natural for the problems we discuss here.

\item The reader might wish to interpret the limiting process $\theta(t)$ as the ``bridge of the integral of a Brownian bridge''. Indeed, with
\[
 \bs{Y}=\bl(Y_0,Y_1,\dots,Y_N)\,,\qquad Y_k\equiv\frac1{N+1}\sum_{j=1}^kX_j
\]
and $\bs{X}=(X_0,X_1,\dots,X_N)$ satisfying the invariance principle, the scaling limit of $\bs{Y}$ becomes the integral of the scaling limit of $\bs{X}$, ie., a Brownian motion; see the proof of Theorem~\ref{thm:theta-fdd} below. Of course, a similar interpretation holds for other results in this section. We shall leave such observations as an exercise for a motivated reader.

Also, one might wish to notice that the function $m(t)=\sfE\theta(t)$ satisfies
\[
 m(0)=m'(0)=0\qquad\text{ and }\qquad m(1)=b\,,\quad m'(1)=a\,,
\]
which is not surprising since our choice of the exponents $\gamma$ and $\delta$ in  \Eqref{scaling-relation} guarantees that the limiting process $\theta(t)$ shares common gradient restrictions with all its discretizations.
\end{enumerate}
\end{remark}

Rewriting \Eqref{vfi-eta.relation} in the form
\[
\vfi_{N_t+1}\equiv(N_t+1)\xi_1+(N+1)\eps\,Y_{N_t}\,,
\]
we observe that the boundary conditions \Eqref{calInxio.def},
\begin{equation}
\label{eq:vfi-boundary-fclt}
 \vfi_0 = 0\,,\quad \vfi_1 = \xi_L\,,\quad \vfi_{N+1} = d_{N+1}\,,\quad \vfi_N = d_{N+1} + \xi_R\,,
\end{equation}
become
\begin{equation}
\label{eq:XY-boundary-fclt}
X_N = - ( \xi_L + \xi_R ) / \eps\,,\quad   Y_N = (d_{N+1}/(N+1) - \xi_L )/ \eps
\end{equation}
and we can rewrite the theorem above in terms of the ``profile process'' $\vfi$:
\begin{corollary}\sl
\label{cor:CLT}
Conditioned on \Eqref{vfi-boundary-fclt} with the property
\[
- ( \xi_L + \xi_R ) / (\eps\sigma_N\sqrt{N})\to a\,,\qquad (d_{N+1}/(N+1) - \xi_L )/(\eps\sigma_N\sqrt{N})\to b
\]
as $N\to\infty$ (recall that $N\eps\approx c$) the distribution of the process
\[
 \bl(\vfi_{N_t+1}-(N_t+1)\xi_L\br)/\bl(\sigma_N\sqrt{N}\,(N+1)\eps\br)\,,\qquad t\in[0,1]\,,
\]
w.r.t.\ to the probability measure $\bfPNxi d(\cdot)$ from \Eqref{pinned.measure} converges weakly in $\bfC[0,1]$ to the limiting Gaussian distribution with parameters \Eqref{theta-moments}; in particular, its one-dimensional distributions approach
\[
\calN\Bl(t^2(t-1)a+t^2(3-2t)b,\tfrac13 t^3(1-t)^3\Br)
\]
as $N\to\infty$.
\end{corollary}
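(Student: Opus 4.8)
The plan is to deduce the corollary directly from Theorem~\ref{thm:theta-fclt} by means of the exact affine change of variables already recorded above. First I would start from \Eqref{vfi-eta.relation}, which upon replacing $k$ by $N_t+1$ and recalling the definition of $Y_k$ in \Eqref{XYk.def} yields the identity
\[
\vfi_{N_t+1}-(N_t+1)\xiL=(N+1)\eps\,Y_{N_t}\,,
\]
valid for every $N$ and every $t\in[0,1]$ (here I use $\xi_1\equiv\xiL$). This is an exact algebraic relation, with no asymptotics involved, so dividing both sides by $\sigma_N\sqrt{N}\,(N+1)\eps$ shows that, at the grid points $t=m/N$, the rescaled profile process appearing in the statement is literally $Y_m/(\sigma_N\sqrt N)$.

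Next I would match the conditioning. The boundary data \Eqref{vfi-boundary-fclt} are equivalent, through the computation leading to \Eqref{XY-boundary-fclt}, to fixing $X_N=a_N$ and $Y_N=b_N$ with $a_N=-(\xiL+\xiR)/\eps$ and $b_N=(d_{N+1}/(N+1)-\xiL)/\eps$. Consequently the conditional law of the profile process under $\bfPNxi d(\cdot)$ is exactly the conditional law of $Y$ featuring in \Eqref{thetaN-process}. Moreover the two scaling hypotheses imposed in the corollary read $a_N/(\sigma_N\sqrt N)\to a$ and $b_N/(\sigma_N\sqrt N)\to b$, that is, precisely the hypotheses of Theorem~\ref{thm:theta-fclt}; the positivity of the conditioning probability for all large $N$ is the same requirement in both formulations.

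Combining these two observations, the rescaled profile process and $\theN(t)$ from \Eqref{thetaN-process} agree at all grid points $t=m/N$ under the matched conditioning, and since both are obtained from these grid values by linear interpolation they coincide as random elements of $\bfC[0,1]$. The weak convergence to the Gaussian limit $\theta$ with moments \Eqref{theta-moments} is then immediate from Theorem~\ref{thm:theta-fclt}. The one-dimensional marginals follow by putting $s=t$ in the covariance in \Eqref{theta-moments}, which gives variance $\tfrac13 t^3(1-t)^3$, and reading the mean $t^2(t-1)a+t^2(3-2t)b$ off the first line of \Eqref{theta-moments}.

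I expect no genuine obstacle here, the content being an exact identity rather than an estimate. The only point needing care is the bookkeeping of the continuous version: the map $t\mapsto\vfi_{N_t+1}$ with $N_t=[Nt]$ is a step function, whereas $\theN$ is piecewise linear. I would simply read the profile process in the statement as the linear interpolation of its grid values, which is the natural interpretation consistent with convergence in $\bfC[0,1]$; then the two processes are identical. Should one prefer to keep the step function, its sup-norm distance to the interpolant is at most $\max_{m}|Y_{m+1}-Y_m|/(\sigma_N\sqrt N)$, which is negligible by the tightness underlying Theorem~\ref{thm:theta-fclt}, and the limit is unchanged.
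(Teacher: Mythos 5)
Your proposal is correct and follows essentially the same route as the paper: the paper's entire ``proof'' of the corollary is the translation displayed just before its statement, namely the exact identity $\vfi_{N_t+1}\equiv(N_t+1)\xi_1+(N+1)\eps\,Y_{N_t}$ together with the rewriting of \Eqref{vfi-boundary-fclt} as \Eqref{XY-boundary-fclt}, after which Theorem~\ref{thm:theta-fclt} applies verbatim. Your closing remark on the step-function versus piecewise-linear reading of $t\mapsto\vfi_{N_t+1}$ is a sensible clarification of a point the paper leaves implicit, and your resolution of it is sound.
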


\begin{remark}\label{rem:Gauss}\rm
In the most popular case considered in the physical literature, namely the Gaussian case $\Phi(x) \sim x^2$ with the Hamiltonian (cf.~\Eqref{Ham})
\[
\calH_N(\bvfi)\equiv\frac\kappa2\sum_{j=1}^{N}\frac{\bl(\Delta\vfi_j\br)^2}{\eps}\,,
\]
the random variables $\eta_j\equiv\eps^{-1}\Delta\vfi_j$ have variance $\sigma_N^2=O(\eps^{-1})=O(N/c)$, so that by the corollary above the fluctuations of the polymer are of order
\[
\sigma_N\sqrt{N}\,(N+1)\eps\sim\sqrt{c}N = c^{3/2} \eps^{-1}\,,
\]
ie., they live on the macroscopic scale.
\end{remark}

The rest of this section is devoted to the proof of Theorem~\ref{thm:theta-fclt}. We first derive convergence of finite-dimensional distributions of the process $\theN(t)$ (see Theorem~\ref{thm:theta-fdd} below) and then establish tightness of the probability distributions of $\theN(t)$ in $\bfC[0,1]$.

\medskip
We turn now to the proof of Theorem~\ref{thm:theta-fclt}. Let $\chi(u)$, $u\in\BbbR$, denote the characteristic function of $\eta$,
\[
 \chi(u)\DF\sfE\exp\bl\{iu\eta\br\}\,;
\]
by the moment assumption above we have:
\begin{equation}
\label{eq:chi-smallU-expansion}
\chi(u/\sigma_N)=1-\frac{u^2}2+o(u^2)\qquad\text{ as $u\to0$.}
\end{equation} 
Fix an integer $k\ge0$ and a collection of real numbers $t_j$ satisfying
\begin{equation}
\label{eq:tj-def}
0\equiv t_0<t_1<t_2<\dots<t_k<t_{k+1}\equiv1\,.
\end{equation} 
Our first goal is to prove the central limit theorem for the random vector
\begin{equation}
\label{eq:ZNk-def}
Z_N^k\DF\frac{1}{\sigma_N\sqrt{N}}\bl(X_N,Y_{N_{t_1}},Y_{N_{t_2}},\dots,Y_{N_{t_k}},Y_{N_{t_{k+1}}}\br)\,.
\end{equation} 
To this end, observe that the corresponding characteristic function reads
\[
 \overline{\chi}_N^k\bl(u_0,u_1,\dots,u_{k+1}\br)
=\sfE\exp\Bl\{\frac{i}{\sigma_N\sqrt{N}}\Bl(u_0X_N+\sum_{l=1}^{k+1}u_lY_{N_{t_l}}\Br)\Br\}\,.
\]
It is convenient to denote
\begin{equation}
\label{eq:uk-jx-def}
\begin{gathered}
 u_N^k(j)\DF u_0+\sum_{l=1}^{k+1}\frac{u_l}{N+1}(N_{t_l}+1-j)^+\,,\quad 1\le j\le N\,,
\\
u^k(x)\DF u_0+\sum_{l=1}^{k+1}u_l(t_l-x)^+\,,\quad 0\le x\le1\,.
\end{gathered}
\end{equation} 
Then with $\bs{\bar u}=(u_0,u_1,\dots,u_{k+1})^\sfT\in\BbbR^{k+2}$ we rewrite
\begin{equation}
\label{eq:barchiNk-product}
\overline{\chi}_N^k(\bs{\bar u})=\prod_{j=1}^N\chi\Bl(u_N^k(j)/(\sigma_N\sqrt{N})\Br)
\end{equation} 
so that, in view of the asymptotic relation \Eqref{chi-smallU-expansion} and the limiting assumption $\sigma_N^2 N\to\infty$ (as $N\to\infty$), we get, uniformly in $\bs{\bar u}$ from compact sets in $\BbbR^{k+2}$,
\begin{equation}
\label{eq:log-barchiNk-expansion}
\log\overline{\chi}_N^k(\bs{\bar u})
=-\frac{1}{2N}\sum_{j=1}^N\bl[u_N^k(j)\br]^2+o(1)
=-\frac{1}2\int_0^1\bl[u^k(x)\br]^2\,dx+o(1)\,.
\end{equation}
By a routine (but straightforward!) induction one deduces the following result:

\begin{lemma}\label{lem:Qmatrix}\sl
 For $s$, $t$ with $0\le s\le t\le1$ denote
\[
 f(t)\equiv\frac{t^2}2\,,\qquad g(s,t)\equiv\frac{s^2}6(3t-s)\,.
\]
Then for every integer $k\ge0$ the quadratic form
\[
 \int_0^1\bl[u^k(x)\br]^2\,dx=\sum_{l_1,l_2=0}^{k+1}\sfq_{l_1,l_2}\,u_{l_1}u_{l_2}
\]
has the matrix
\begin{equation}
\label{eq:Q-matrix}
\sfQ^k=\bl[\sfq_{l_1,l_2}\br]_{l_1,l_2=0}^{k+1}
\end{equation} 
with the entries
\begin{equation}
\label{eq:qlm-entries}
\begin{gathered}
\sfq_{00}=1\,,\qquad \sfq_{0l}=\sfq_{l0}=f(t_l)\,,\quad 1\le l\le k+1\,,
\\[1ex]
\sfq_{l_1,l_2}=\sfq_{l_2,l_1}=g(t_{l_1},t_{l_2})\,,\quad 1\le l_1\le l_2\le k+1\,.
\end{gathered}
\end{equation} 
\end{lemma}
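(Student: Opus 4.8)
The plan is to bypass the induction and compute the matrix entries directly as $L^2([0,1])$ inner products of the functions building up $u^k$. Setting $\phi_0(x)\equiv 1$ and $\phi_l(x)\equiv(t_l-x)^+$ for $1\le l\le k+1$, we have $u^k(x)=\sum_{l=0}^{k+1}u_l\phi_l(x)$, so expanding the square gives
\[
\int_0^1\bl[u^k(x)\br]^2\,dx=\sum_{l_1,l_2=0}^{k+1}u_{l_1}u_{l_2}\,\sfq_{l_1,l_2}\,,\qquad \sfq_{l_1,l_2}=\int_0^1\phi_{l_1}(x)\phi_{l_2}(x)\,dx\,.
\]
Symmetry of the matrix is then automatic, and the entries \Eqref{qlm-entries} follow from three elementary integrals.

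The first two are immediate: $\sfq_{00}=\int_0^1 1\,dx=1$, and for $1\le l\le k+1$ the integrand $(t_l-x)^+$ is supported on $[0,t_l]$, whence $\sfq_{0l}=\int_0^{t_l}(t_l-x)\,dx=t_l^2/2=f(t_l)$. The only case needing care is $1\le l_1\le l_2\le k+1$. Writing $s=t_{l_1}$ and $t=t_{l_2}$, so that $s\le t$ by \Eqref{tj-def}, the key observation is that the support $[0,s]$ of $(s-x)^+$ lies inside the support $[0,t]$ of $(t-x)^+$; hence on $[0,s]$ the product equals the single cubic $(s-x)(t-x)$ and
\[
\sfq_{l_1,l_2}=\int_0^s(s-x)(t-x)\,dx=\tfrac12 s^2 t-\tfrac16 s^3=\tfrac{s^2}6(3t-s)=g(s,t)\,.
\]

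I expect no genuine obstacle here; the only bookkeeping is to keep $t_{l_1}\le t_{l_2}$ so that one knows which linear factor controls the support, a point handled once and for all by the ordering $l_1\le l_2$. Because every entry is given by the same two closed-form expressions $f$ and $g$ independently of $k$, the direct computation is in fact cleaner than the inductive argument the statement alludes to: there is nothing that grows with $k$ to be propagated, so the quadratic form is pinned down uniformly in $k$.
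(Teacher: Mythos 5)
Your computation is correct, and it takes a genuinely different route from the paper, which does not spell out a proof at all but instead deduces the lemma ``by a routine (but straightforward!) induction'' on $k$ -- i.e., by adding the time points $t_1<\dots<t_k$ one at a time and propagating the formula for the quadratic form. You instead read off the whole matrix in one stroke as the Gram matrix of the family $\phi_0\equiv1$, $\phi_l(x)=(t_l-x)^+$, $1\le l\le k+1$, in $L^2[0,1]$, which reduces the claim to the three elementary integrals you compute; all three are evaluated correctly (in particular $\int_0^s(s-x)(t-x)\,dx=\tfrac12 s^2t-\tfrac16 s^3=g(s,t)$ for $s\le t$, where the nesting of supports is exactly the point you flag), and your identification $\sfq_{l_1,l_2}=\int_0^1\phi_{l_1}(x)\phi_{l_2}(x)\,dx$ matches the statement because the quadratic form there is written as a full double sum with symmetric entries, so no factors of $2$ need tracking. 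What your approach buys over the induction: the entries come out uniformly in $k$ with nothing to propagate, symmetry is automatic, and as a free by-product $\sfQ^k$ is exhibited as a Gram matrix, hence positive semi-definite -- indeed positive definite, since $1,(t_1-x)^+,\dots,(t_k-x)^+,(1-x)$ are linearly independent on $[0,1]$ -- which is precisely the nondegeneracy ($\det\sfQ^k\neq0$) tacitly needed later when Theorem~\ref{thm:FCLT} invokes $\sfQ^{-1}$ and $|\det\sfQ|^{-1/2}$. The paper's inductive route establishes the same formulas but yields that nondegeneracy only indirectly; your argument is both more elementary and slightly more informative, so there is no gap to repair.
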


The above lemma together with \Eqref{log-barchiNk-expansion} imply the central limit result:

\begin{theorem}[Central Limit Theorem]\sl
\label{thm:CLT}
For every fixed $k\ge0$ the distribution of the random vector $Z_N^k$ from \Eqref{ZNk-def} converges as $N\to\infty$ to the Gaussian distribution with zero mean and the covariance matrix $\sfQ^k$ defined in \Eqref{Q-matrix}--\Eqref{qlm-entries}. The convergence of the corresponding characteristic functions \Eqref{log-barchiNk-expansion} is uniform on compact subsets of~$\BbbR^{k+2}$.
\end{theorem}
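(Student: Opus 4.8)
The plan is to read off the limiting characteristic function of $Z_N^k$ and invoke Lévy's continuity theorem; essentially all of the analytic content is already packaged in the uniform expansion \Eqref{log-barchiNk-expansion} and the algebraic identity of Lemma~\ref{lem:Qmatrix}, so the proof of the theorem itself is the synthesis of these two ingredients. I would start from the product representation \Eqref{barchiNk-product} of the joint characteristic function $\overline{\chi}_N^k(\bs{\bar u})$, which reduces the problem to controlling a single scalar expansion, and then feed in the two given facts.

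Substituting the quadratic-form expression from Lemma~\ref{lem:Qmatrix} into \Eqref{log-barchiNk-expansion} gives, uniformly for $\bs{\bar u}$ in compact subsets of $\BbbR^{k+2}$,
\[
\log \overline{\chi}_N^k(\bs{\bar u})
= -\frac{1}{2}\sum_{l_1,l_2=0}^{k+1}\sfq_{l_1,l_2}\,u_{l_1}u_{l_2}+o(1)
= -\frac{1}{2}\,\bs{\bar u}^\sfT\sfQ^k\,\bs{\bar u}+o(1),
\]
with $\sfQ^k$ as in \Eqref{Q-matrix}--\Eqref{qlm-entries}. Exponentiating this uniform convergence of logarithms yields
\[
\overline{\chi}_N^k(\bs{\bar u})\longrightarrow \exp\Bl(-\tfrac12\,\bs{\bar u}^\sfT\sfQ^k\,\bs{\bar u}\Br)
\]
uniformly on compacts. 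The right-hand side is exactly the characteristic function of the centred Gaussian law $\calN(0,\sfQ^k)$; here $\sfQ^k$ is automatically a legitimate (symmetric, positive semi-definite) covariance matrix, since the quadratic form it represents equals $\int_0^1[u^k(x)]^2\,dx\ge0$ for every $\bs{\bar u}$. As the limit is continuous at the origin, Lévy's continuity theorem applies and delivers the weak convergence $Z_N^k\Rightarrow\calN(0,\sfQ^k)$, which is the assertion of the theorem, while the uniform-on-compacts statement for the characteristic functions is inherited directly from the uniformity already recorded in \Eqref{log-barchiNk-expansion}.

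I do not expect a genuine obstacle at this level: the passage from the uniform convergence of the logarithms to that of the $\overline{\chi}_N^k$ themselves is routine because $|\overline{\chi}_N^k|\le1$ and the limit is bounded away from a singularity. The substantive work sits upstream, in the derivation of \Eqref{log-barchiNk-expansion}: one must control the Taylor remainder of $\chi(\cdot/\sigma_N)$ \emph{uniformly over all $N$ summands}, which requires the arguments $u_N^k(j)/(\sigma_N\sqrt{N})$ to be uniformly small (guaranteed by $N\sigma_N^2\to\infty$), and one must check that the Riemann sum $\frac1N\sum_j[u_N^k(j)]^2$ converges to $\int_0^1[u^k(x)]^2\,dx$, the $u_N^k(j)$ being the natural discretization of $u^k(x)$ at $x=j/N$ up to $O(1/N)$ boundary corrections. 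Both of these, together with the combinatorial computation of the entries in Lemma~\ref{lem:Qmatrix}, are exactly the inputs I would treat as already established, leaving the theorem as their immediate consequence.
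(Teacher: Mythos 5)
Your proposal is correct and follows essentially the same route as the paper, whose entire proof of Theorem~\ref{thm:CLT} is the one-line observation that Lemma~\ref{lem:Qmatrix} together with \Eqref{log-barchiNk-expansion} imply the result. You simply make explicit what the paper leaves implicit: the substitution of the quadratic form, the identification of the Gaussian limit, and the appeal to L\'evy's continuity theorem, with the uniformity on compacts inherited directly from \Eqref{log-barchiNk-expansion}.
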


\begin{remark}\rm
 According to \Eqref{ZNk-def}, this theorem implies that the fluctuations of $X_N$ are of order $\sigma_N\sqrt{N}$. Combining this with \Eqref{grad.bc} and \Eqref{XYk.def}, we see that the end-to-end gradient fluctuations $\nabla\vfi_{N+1}-\nabla\vfi_1$ of the polymer $\bvfi$ are of order $\eps\sigma_N\sqrt{N}$. In the natural Gaussian scaling $\sigma_N^2=O(\eps^{-1})$ of Remark~\ref{rem:Gauss} this implies that
\[
 \Var\bl(\nabla\vfi_{N+1}-\nabla\vfi_1\br)=O(\eps^2\sigma_N^2N)=O(c)\,.
\]
In other words, the persistence length and the polymer length in our model are of the same order.
\end{remark}

Our next goal is to establish the local version of the above theorem. For
\[
 \bs{\bar x}=\bl(x_0,x_1,\dots,x_k,x_{k+1}\br)^{\sfT}\in\BbbR^{k+2}
\]
let $\sfp^k_{\sfQ}(\bs{\bar x})$ denote the probability density
\[
 \sfp^k_{\sfQ}(\bs{\bar x})=\bl(2\pi\br)^{-(k+2)/2}|\det\sfQ|^{-1/2}
\exp\Bl\{-\frac1{2}\bl(\sfQ^{-1}\bs{\bar x},\bs{\bar x}\br)\Br\}
\]
of the limiting Gaussian distribution with the characteristic function
\[
\overline{\chi}_{\sfQ}^k(\bs{\bar u})
=\exp\Bl\{-\tfrac12\bl(\sfQ\bs{\bar u},\bs{\bar u}\br)\Br\}\,.
\]
\begin{theorem}[Local CLT]\label{thm:FCLT}\sl
Let a sequence of vectors
\[
\bs{\bar x}^{(N)}
=\bl(x_0^{(N)},x_1^{(N)},\dots,x_k^{(N)},x_{k+1}^{(N)}\br)^{\sfT}\in\BbbR^{k+2}
\]
be such that $\bs{\bar x}^{(N)}\to\bs{\bar x}\in\BbbR^{k+2}$ as $N\to\infty$ and the probability $\sfP\bl(Z_N^k=\bs{\bar x}^{(N)}\br)$
be positive for all $N$ large enough. Then as $N\to\infty$ we have
\[
 \sigma_N^{k+2}N^{(3k+4)/2}\,\sfP\bl(Z_N^k=\bs{\bar x}^{(N)}\br)=\sfp^k_{\sfQ}(\bs{\bar x})+o(1)
\]
with the remainder $o(1)$ vanishing asymptotically, as $N\to\infty$, uniformly in $\bs{\bar x}$ on compact subsets of~$\BbbR^{k+2}$.
\end{theorem}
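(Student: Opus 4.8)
The plan is to prove this as a multidimensional local limit theorem by Fourier inversion, upgrading the integral central limit theorem of Theorem~\ref{thm:CLT} to a statement about point masses. Since $\eta$ is lattice-valued, the random vector $Z_N^k$ of \Eqref{ZNk-def} takes values in a lattice $\Lambda_N\subset\BbbR^{k+2}$, namely the image of $(\eta_1,\dots,\eta_N)$ under the linear map defining $(X_N,Y_{N_{t_1}},\dots,Y_{N_{t_{k+1}}})$, rescaled by $1/(\sigma_N\sqrt N)$. First I would compute the covolume $v_N$ of $\Lambda_N$: the $X_N$–coordinate has spacing of order $1/(\sigma_N\sqrt N)$, while each $Y_{N_{t_l}}$–coordinate carries the extra prefactor $1/(N+1)$ visible in \Eqref{XYk.def}, so that (for increments of span one; a different span merely rescales the constant) $v_N^{-1}=\sigma_N^{k+2}N^{(3k+4)/2}(1+o(1))$, which is precisely the normalization appearing in the statement. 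Writing $\calD_N$ for a fundamental domain of the dual lattice (of volume $(2\pi)^{k+2}/v_N$), the inversion formula reads
\[
 v_N^{-1}\,\sfP\bl(Z_N^k=\bs{\bar x}^{(N)}\br)=\frac1{(2\pi)^{k+2}}\int_{\calD_N}e^{-i(\bs{\bar u},\bs{\bar x}^{(N)})}\,\overline{\chi}_N^k(\bs{\bar u})\,d\bs{\bar u}\,,
\]
and since the target density is $\sfp^k_{\sfQ}(\bs{\bar x})=(2\pi)^{-(k+2)}\int_{\BbbR^{k+2}}e^{-i(\bs{\bar u},\bs{\bar x})}\overline{\chi}_{\sfQ}^k(\bs{\bar u})\,d\bs{\bar u}$, it suffices to show the first integral converges to the second, uniformly in $\bs{\bar x}$ on compacts.

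Second, I would split $\calD_N$ into a fixed ball $B_R=\{|\bs{\bar u}|\le R\}$ and its complement in $\calD_N$. On $B_R$ the work is already done: by \Eqref{log-barchiNk-expansion} together with Lemma~\ref{lem:Qmatrix}, $\overline{\chi}_N^k\to\overline{\chi}_{\sfQ}^k=\exp\{-\frac12(\sfQ^k\bs{\bar u},\bs{\bar u})\}$ uniformly on $B_R$, and since $\bs{\bar x}^{(N)}\to\bs{\bar x}$ the phase factors converge as well, so the $B_R$–part of the first integral converges to that of the Gaussian one. Letting $R\to\infty$ afterwards recovers the full Gaussian integral, the discarded remainder being controlled by the Gaussian tail; here I use that $\sfQ^k$ is positive definite, which follows because it is the Gram matrix of the quadratic form $\int_0^1[u^k(x)]^2\,dx$ and the functions $1,(t_1-x)^+,\dots,(t_{k+1}-x)^+$ are linearly independent on $[0,1]$ for distinct $t_l$.

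The real work, and the main obstacle, is to show that the contribution of $\calD_N\setminus B_R$ is negligible uniformly in $N$. I would treat two regimes. For moderate frequencies $R\le|\bs{\bar u}|\le\delta\sqrt N$, the arguments $u_N^k(j)/(\sigma_N\sqrt N)$ lie in the range where \Eqref{chi-smallU-expansion} yields a quadratic bound $|\chi(w)|\le e^{-c\sigma_N^2w^2}$ (note $|u_N^k(j)|\le\sqrt{k+2}\,|\bs{\bar u}|$, since each coefficient $(N_{t_l}+1-j)^+/(N+1)$ is at most $1$), whence by \Eqref{barchiNk-product}
\[
 \bl|\overline{\chi}_N^k(\bs{\bar u})\br|\le\exp\Bl\{-\frac cN\sum_{j=1}^N\bl[u_N^k(j)\br]^2\Br\}\le\exp\bl\{-c'|\bs{\bar u}|^2\br\}\,,
\]
the last inequality using that $\frac1N\sum_j[u_N^k(j)]^2\to(\sfQ^k\bs{\bar u},\bs{\bar u})$ is bounded below by a positive multiple of $|\bs{\bar u}|^2$; this makes the moderate–regime integral no larger than a Gaussian tail, hence $o(1)$ as $R\to\infty$. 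For the far regime $\delta\sqrt N\le|\bs{\bar u}|$ within $\calD_N$ the single–step expansion is useless and one must instead exploit the genuine aperiodicity of $\eta$: the domain $\calD_N$ confines the scaled arguments to one period of $\chi$, and one shows that for such $\bs{\bar u}$ a positive fraction of the arguments $u_N^k(j)/(\sigma_N\sqrt N)$ stay bounded away from the frequency lattice of $\eta$ (this uses the piecewise–linear, non–constant dependence of $j\mapsto u_N^k(j)$), forcing $\prod_j|\chi(\cdot)|$ to decay like $\rho^{\,cN}$ for some $\rho<1$, which beats the at–most–polynomial volume of the far region.

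The subtle point — and what I expect to require the most care — is that the law of $\eta$, and hence $\chi$, depends on $N$ through $\sigma_N^2$: this is in truth a triangular–array local CLT, so the bound $|\chi|\le\rho<1$ off the period must hold uniformly in $N$. In the model at hand this is automatic, since $\eta=\eps^{-1}\Delta\vfi$ with $\Delta\vfi$ integer–valued of a fixed law, so $\chi(w)=\hat\rho(w/\eps)$ for a fixed characteristic function $\hat\rho$ and the required non–degeneracy reduces to the ordinary aperiodicity of $\Delta\vfi$; in the abstract setting it should be imposed as a mild Cram\'er–type hypothesis, or one may simply invoke the local limit theorems of \cite{rDoH96} for integrals of random walks and track the modifications forced by the scaling $\sigma_N^2N\to\infty$. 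Once both regimes give $o(1)$, combining with the central estimate and letting $R\to\infty$ completes the proof, the uniformity in $\bs{\bar x}$ on compacts being inherited from the uniform convergence in \Eqref{log-barchiNk-expansion}.
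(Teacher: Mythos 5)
Your overall route coincides with the paper's: the authors also treat this as a Fourier-inversion local CLT, delegating the standard scaffolding (your central ball and moderate regime) to \cite[Thm~4.2]{rDoH96} and devoting their entire written proof to the one step you leave as an assertion, namely the far-frequency (``off-line'') estimate. Where you write that ``one shows that for such $\bs{\bar u}$ a positive fraction of the arguments stay bounded away from the frequency lattice'', the paper actually proves this, and it is the only genuinely new content of its proof: since $j\mapsto u_N^k(j)$ is piecewise linear with nodes at $N_{t_0},\dots,N_{t_{k+1}}$, it suffices by \cite[Lemma~4.4]{rDoH96} that either the endpoint value $|u_N^k(N)|$ or one of the block increments $|u_N^k(N_{t_l})-u_N^k(N_{t_{l+1}})|$ exceed a fixed multiple of $\zeta\sigma_N$; the paper's concluding lemma establishes this by contraposition, using $u_N^k(N)=u_0+u_{k+1}/(N+1)$ and $u_N^k(N_{t_m})-u_N^k(N_{t_{m+1}})=(t_{m+1}-t_m)\sum_{l>m}u_l+O(N^{-1})$ to show that if all these quantities are small then every coordinate $u_l$ is $O(\zeta\sigma_N/\Delta)$, with $\Delta=\min_m(t_{m+1}-t_m)$, so that $\|\bs{\bar u}\|_2$ is small. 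Your parenthetical hint (piecewise-linear, non-constant dependence on $j$) names the right mechanism, but as written the crux of the theorem is asserted rather than proved; you should supply this short quantitative argument, after which your far-regime bound $\rho^{\,cN}$ indeed beats the polynomial volume of the dual domain.

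A secondary but genuine error is your claim that uniformity in $N$ of the off-line bound is automatic ``since $\eta=\eps^{-1}\Delta\vfi$ with $\Delta\vfi$ integer-valued of a fixed law''. The law of $\Delta\vfi$ is not fixed: under \eqref{eq:Ham} it has weights proportional to $e^{-\eps\Phi(\eps^{-1}m)}$, $m\in\BbbZ$, hence depends on $N$ through $\eps=c/N$ (in the Gaussian case its variance is of order $\eps^{-1}$ and diverges). Only the lattice is fixed, so $\chi$ is a genuinely $N$-dependent characteristic function, and the bound $r_\zeta\in(0,1)$ in \eqref{eq:chi-off-line}, uniform in $N$, is an assumption --- which is exactly how the paper treats it (``by the assumption on the distribution of $\eta$''); your fallback of imposing a Cram\'er-type hypothesis, or invoking \cite{rDoH96} directly, is the correct fix. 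The same triangular-array caveat applies to your moderate-regime bound $|\chi(w)|\le e^{-c\sigma_N^2w^2}$, which needs the expansion \eqref{eq:chi-smallU-expansion} to hold uniformly in $N$. On the positive side, your covolume computation $v_N^{-1}=\sigma_N^{k+2}N^{(3k+4)/2}(1+o(1))$ correctly accounts for the otherwise unexplained normalization in the statement, and your observation that $\sfQ^k$ is positive definite because $1,(t_1-x)^+,\dots,(t_{k+1}-x)^+$ are linearly independent on $[0,1]$ is a detail the paper leaves implicit.
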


\begin{proof}
The claim of the theorem follows from standard considerations provided the off-line  property is established (for a recent exposition, see, \eg, \cite[Thm~4.2]{rDoH96}); it thus remains to verify the latter.

By the assumption on the distribution of $\eta$, we have, for all $\zeta>0$ small enough
\begin{equation}
\label{eq:chi-off-line}
\sup_{\zeta\le|u/\sigma_N|\le T}\bl|\chi(u/\sigma_N)\br|=r_\zeta\in(0,1)\,,
\end{equation}
where $T=\pi/d$ for lattice distributions of period $d>0$. In view of the factorization \Eqref{barchiNk-product}, the off-line property shall follow once we show that for some small enough $\zeta=\zeta_k>0$ sufficiently many values $u_N^k(j)$ satisfy the condition (recall \Eqref{uk-jx-def})
\[
 \bl|u_N^k(j)\br|\ge\zeta_k \sigma_N\,,
\]
uniformly in $N$ large enough. However, by the very definition \Eqref{uk-jx-def}, the sequence $u_N^k(j)$, $j=1,2,\dots,N$ is a piecewise linear sequence of real numbers interpolating the values
\[
 u_N^k(N_{t_0})\,,\quad u_N^k(N_{t_1})\,,\quad\dots,\quad u_N^k(N_{t_{k+1}})
\]
and having increments (recall \Eqref{Nt-def})
\[
 u_N^k(j)-u_N^k(j+1)=\frac1{N+1}\sum_{l=1}^{k+1}u_l\one_{j<N_{t_l}}\,.
\]
By \cite[Lemma~4.4]{rDoH96} it is enough to show that for $\zeta>0$ as in \Eqref{chi-off-line} one has (recall \Eqref{Nt-def}, \Eqref{uk-jx-def})
\[
 \max_{l=0,\dots,k+1}\bl|u_N^k(N_{t_l})\br|>2\zeta\sigma_N\,,
\]
as then the rest of the proof of Theorem~\ref{thm:FCLT} would be analogous to that of \cite[Thm~4.2]{rDoH96}.
\end{proof}

We prove the remaining condition by verifying the following claim.

\begin{lemma}\sl
 For a fixed collection $t_1$, \dots, $t_k$ as in \Eqref{tj-def}, let
\[
 \Delta=\min_{m\ge0}\bl(t_{m+1}-t_m\br)>0\,.
\]
Then for every $\bs{\bar u}\in\BbbR^{k+2}$ such that 
\[
 \bl\|\bs{\bar u}\br\|_2=\Bl(\sum_{l=0}^{k+1}(u_l)^2\Br)^{1/2}>\frac{2\eta\sigma_N}\Delta\sqrt{16k+5}
\]
and all $N$ large enough at least one of the following inequalities holds:
\begin{equation}
\label{eq:unk-increment-bound}
\bl|u_N^k(N)\br|>2\zeta\sigma_N\,,\qquad \bl|u_N^k(N_{t_l})-u_N^k(N_{t_{l+1}})\br|>4\zeta\sigma_N \quad l=0,\dots,k\,.
\end{equation}
\end{lemma}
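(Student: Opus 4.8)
The plan is to argue by contraposition. Suppose all the inequalities in \eqref{eq:unk-increment-bound} fail, i.e.\ $|u_N^k(N)|\le 2\zeta\sigma_N$ and $|u_N^k(N_{t_l})-u_N^k(N_{t_{l+1}})|\le 4\zeta\sigma_N$ for $l=0,\dots,k$ (reading the $\eta$ in the stated threshold as $\zeta$); I shall deduce $\|\bs{\bar u}\|_2\le\frac{2\zeta\sigma_N}{\Delta}\sqrt{16k+5}$, which is precisely the negation of the hypothesis on $\|\bs{\bar u}\|_2$. The starting point is the piecewise-linear description of $j\mapsto u_N^k(j)$ recorded just before the lemma: on each block $N_{t_m}\le j\le N_{t_{m+1}}$ the one-step increments are constant and equal to $\frac1{N+1}\sum_{l=m+1}^{k+1}u_l$. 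Introducing the tail sums $S_m\DF\sum_{l=m+1}^{k+1}u_l$ (for $m=0,\dots,k$, with $S_{k+1}\DF0$) and summing these increments across a block yields the exact identities
\[
u_N^k(N_{t_m})-u_N^k(N_{t_{m+1}})=\frac{N_{t_{m+1}}-N_{t_m}}{N+1}\,S_m\,,\qquad
u_N^k(N)=u_0+\frac{u_{k+1}}{N+1}\,.
\]

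The key point is that $\bs{\bar u}\mapsto(u_0,S_0,\dots,S_k)$ is a triangular change of variables, inverted at once by $u_{m+1}=S_m-S_{m+1}$ for $m=0,\dots,k$. Hence controlling the $k+2$ observables above is equivalent to controlling $u_0$ together with all the $S_m$, and therefore the entire vector $\bs{\bar u}$. Since the collection $t_1,\dots,t_k$ is fixed, $\frac{N_{t_{m+1}}-N_{t_m}}{N+1}\to t_{m+1}-t_m\ge\Delta$ uniformly in $m$, so for all $N$ large enough the first identity combined with the assumed bound gives $|S_m|\le 4\zeta\sigma_N/\Delta$, while the second identity together with $|u_N^k(N)|\le2\zeta\sigma_N$ gives $|u_0|\le2\zeta\sigma_N$ up to the negligible term $u_{k+1}/(N+1)$.

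It then remains to sum. From $u_{m+1}=S_m-S_{m+1}$ one gets $|u_{k+1}|=|S_k|\le4\zeta\sigma_N/\Delta$ and $|u_{m+1}|\le|S_m|+|S_{m+1}|\le8\zeta\sigma_N/\Delta$ for $m=0,\dots,k-1$; combined with $|u_0|\le2\zeta\sigma_N\le2\zeta\sigma_N/\Delta$ (recall $\Delta\le1/(k+1)\le1$) this yields
\[
\|\bs{\bar u}\|_2^2\le\frac{\zeta^2\sigma_N^2}{\Delta^2}\bl(4+16+64k\br)=\frac{\zeta^2\sigma_N^2}{\Delta^2}\,(64k+20)\,,
\]
that is $\|\bs{\bar u}\|_2\le\frac{2\zeta\sigma_N}{\Delta}\sqrt{16k+5}$, as required. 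Note that these crude triangle-inequality bounds already reproduce the stated constant exactly, so no sharpening is needed.

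The one slightly delicate point—bookkeeping rather than a genuine obstacle—is the replacement of the exact coefficients $\frac{N_{t_{m+1}}-N_{t_m}}{N+1}$ by $t_{m+1}-t_m$ and the dropping of the term $u_{k+1}/(N+1)$. Because $t_1,\dots,t_k$ (hence $\Delta$ and $k$) are fixed, the integer-part error is uniform, $\frac{N_{t_{m+1}}-N_{t_m}}{N+1}=(t_{m+1}-t_m)(1-o(1))$ in $m$, and $u_{k+1}/(N+1)=o(\|\bs{\bar u}\|_2)$; both corrections are of lower order and are absorbed for all $N$ large enough, exactly as the phrasing of the statement permits. Once the tail-sum variables $S_m$ are in place, the remainder is a one-line triangular inversion followed by the triangle inequality, and I anticipate no further difficulty.
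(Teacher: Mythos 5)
Your proof is correct and takes essentially the same route as the paper's: arguing by contradiction/contraposition from the exact identities $u_N^k(N)=u_0+u_{k+1}/(N+1)$ and the constant block increments to deduce $|u_{k+1}|\le 4\zeta\sigma_N/\Delta$, $|u_m|\le 8\zeta\sigma_N/\Delta$ for $1\le m\le k$, and $|u_0|\le 2\zeta\sigma_N$, then summing squares to obtain $\|\bs{\bar u}\|_2^2\le(2\zeta\sigma_N/\Delta)^2(16k+5)$. Your tail-sum variables $S_m$ merely make explicit the telescoping the paper leaves implicit, and your treatment of the $O(N^{-1})$ corrections (and reading the misprinted $\eta$ as $\zeta$) matches the paper's own level of rigour.
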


\begin{proof}
We argue by contradiction and start by assuming that none of the inequalities \Eqref{unk-increment-bound} holds. Since 
\[
 u_N^k(N)=u_0+u_{k+1}/(N+1)
\]
and
\[
 u_N^k(N_{t_m})-u_N^k(N_{t_{m+1}})=(t_{m+1}-t_m)\sum_{l>m}u_l+O(N^{-1})
\]
we deduce that
\[
\bl|u_{k+1}\br|\le\frac{4\zeta\sigma_N}\Delta\,,\quad
\bl|u_k\br|\le\frac{8\zeta\sigma_N}\Delta\,,\quad \dots\,,\quad
\bl|u_1\br|\le\frac{8\zeta\sigma_N}\Delta
\]
and therefore that
\[
\bl\|\bs{\bar u}\br\|_2^2\le4\zeta^2\sigma_N^2+\Bl(\frac{4\zeta\sigma_N}\Delta\Br)^2\,(4k+1)
\le\Bl(\frac{2\zeta\sigma_N}\Delta\Br)^2\,(16k+5)\,.
\]
\end{proof}

We now deduce convergence of finite-dimensional distributions of the process $\theN(\cdot)$ from \Eqref{thetaN-process}:

\begin{theorem}\label{thm:theta-fdd}\sl
 Let real sequences $a_N$, $b_N$ be such that 
\[
 \lim_{N\to\infty}\frac{a_N}{\sigma_N\sqrt{N}}=a\,,\qquad \lim_{N\to\infty}\frac{b_N}{\sigma_N\sqrt{N}}=b
\]
and the probability $\sfP(X_N=a_N,Y_N=b_N)$ be positive for all $N$ large enough. Then for every $k\ge1$ the $k$-dimensional distributions of the process $\theN(\cdot)$ converge to those of a Gaussian process $\theta(\cdot)$, whose parameters are
\begin{equation}
\label{eq:theta-process-parameters}
\begin{gathered}
 \sfE\theta(t)=t^2(t-1)a+t^2(3-2t)b\,,\quad t\in[0,1]\,,
\\
\Cov\bl(\theta(s),\theta(t)\br)=\frac{s^2(1-t)^2}6\bl[2t(1-s)+t-s\br]\,,\quad 0\le s\le t\le1\,.
\end{gathered}
\end{equation}
\end{theorem}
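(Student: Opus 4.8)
The plan is to realize the finite-dimensional distributions of $\theN$ as the conditional law, given $(X_N,Y_N)=(a_N,b_N)$, of the interior coordinates of the vector $Z_N^k$ from \Eqref{ZNk-def}, and to pass to the limit via the local CLT of Theorem~\ref{thm:FCLT}. Concretely, I fix interior times $0<t_1<\dots<t_k<1$, set $t_{k+1}\equiv1$, and write the conditional mass function by Bayes' rule as
\[
\sfP\bl(Y_{N_{t_j}}=y_j,\ j\le k \mid X_N=a_N,Y_N=b_N\br)
=\frac{\sfP\bl(Z_N^k=\bs{\bar x}^{(N)}\br)}{\sfP\bl(X_N=a_N,Y_N=b_N\br)}\,,
\]
where the numerator is the $(k+2)$-dimensional local probability for $(X_N,Y_{N_{t_1}},\dots,Y_{N_{t_k}},Y_N)$ and the denominator is the $2$-dimensional local probability (the case $k=0$) for $(X_N,Y_N)$. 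The conceptual point—that conditioning on the degenerate event $\{X_N=a_N,Y_N=b_N\}$ is legitimate—is supplied entirely by Theorem~\ref{thm:FCLT}, so no new probabilistic input is needed.

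The first step is to apply Theorem~\ref{thm:FCLT} separately to numerator and denominator. With $\bs{\bar x}^{(N)}\to(a,z_1,\dots,z_k,b)$, the numerator behaves like $\sigma_N^{-(k+2)}N^{-(3k+4)/2}\,\sfp^k_{\sfQ}(a,z_1,\dots,z_k,b)$ and the denominator like $\sigma_N^{-2}N^{-2}\,\sfp^0_{\sfQ_0}(a,b)$, where $\sfQ_0$ is the $(W_0,W_1)$-block of $\sfQ$ read off from Lemma~\ref{lem:Qmatrix}. Dividing, the powers of $\sigma_N$ and $N$ collapse into the factor $\sigma_N^{-k}N^{-3k/2}$, which is exactly the volume element of the scaled lattice on which the $k$ interior coordinates $Y_{N_{t_j}}/(\sigma_N\sqrt{N})$ live; multiplying the conditional mass function by its inverse yields in the limit the conditional Gaussian density $\sfp^k_{\sfQ}(a,z_1,\dots,z_k,b)/\sfp^0_{\sfQ_0}(a,b)$, uniformly on compacts thanks to the uniformity built into Theorem~\ref{thm:FCLT}. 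This identifies the limiting $k$-dimensional law of $\theN$ as the conditional law of a Gaussian vector given its $W_0$- and $W_1$-coordinates.

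The remaining step is to evaluate the parameters of this conditional Gaussian and match them to \Eqref{theta-process-parameters}. Writing $W_t$ for the limiting field with $\Cov(W_0,W_t)=f(t)=t^2/2$ and $\Cov(W_s,W_t)=g(s,t)=\tfrac{s^2}6(3t-s)$, I condition on $(W_0,W_1)=(a,b)$. Since the block $\sfQ_0$ has entries $\sfq_{00}=1$, $\sfq_{01}=f(1)=\tfrac12$, $\sfq_{11}=g(1,1)=\tfrac13$, with $\det\sfQ_0=\tfrac1{12}$ and $\sfQ_0^{-1}=\bl(\begin{smallmatrix}4&-6\\-6&12\end{smallmatrix}\br)$, the Gaussian regression formula gives the regression vector $(f(t),g(t,1))\,\sfQ_0^{-1}=\bl(t^2(t-1),\,t^2(3-2t)\br)$, whence $\sfE\theta(t)=t^2(t-1)a+t^2(3-2t)b$. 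For the covariance the Schur-complement formula gives, for $s\le t$,
\[
\Cov\bl(\theta(s),\theta(t)\br)=g(s,t)-\bl(f(s),g(s,1)\br)\,\sfQ_0^{-1}\,\bl(f(t),g(t,1)\br)^{\sfT}\,,
\]
and a direct (if tedious) expansion of this polynomial reduces it to $\tfrac{s^2(1-t)^2}6\bl(2t(1-s)+t-s\br)$, as required; the boundary values $\theta(0)=0$ and $\theta(1)=b$ come out automatically.

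I expect the main obstacle to be organizational rather than conceptual: the careful bookkeeping of the lattice-volume factors that turns the ratio of local probabilities into a genuine conditional density is where the precise exponent $(3k+4)/2$ of Theorem~\ref{thm:FCLT} is essential, and the explicit Schur-complement computation must be massaged into the factored form of \Eqref{theta-process-parameters}. Both are routine once the local CLT is in hand, so the proof contains no further probabilistic content beyond Theorem~\ref{thm:FCLT} and the covariance entries of Lemma~\ref{lem:Qmatrix}.
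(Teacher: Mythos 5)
Your proposal is correct and follows essentially the same route as the paper: the paper likewise obtains convergence of the finite-dimensional laws directly from the local CLT of Theorem~\ref{thm:FCLT} (your Bayes-ratio bookkeeping of the normalizing factors $\sigma_N^{k+2}N^{(3k+4)/2}$ versus $\sigma_N^{2}N^{2}$ merely makes explicit what the paper leaves implicit) and then computes the parameters by exactly your Gaussian regression/Schur-complement calculation with $f(t)=t^2/2$, $g(s,t)=\tfrac{s^2}{6}(3t-s)$ and the inverse of the matrix with entries $1,\tfrac12,\tfrac12,\tfrac13$. The only cosmetic difference is that the paper phrases the limiting conditional law as that of the integrated Brownian motion $\calJ(t)=\int_0^t w_u\,\rmd u$ given $(w_1,\calJ(1))=(a,b)$, which carries the same covariance data as your Gaussian vector from Lemma~\ref{lem:Qmatrix}.
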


\begin{proof}
As the convergence result follows directly from the local limit theorem, we shall only derive the parameters \Eqref{theta-process-parameters} of the limiting process~$\theta(t)$.

To start, fix $0\le s\le t\le1$ and notice that the conditional distribution of
\[
 \frac1{\sigma_N\sqrt{N}}\Bl(Y_{N_s},Y_{N_t}\mid X_N=a_N,Y_N=b_N\Br)
\]
converges to that of 
\[
 \bl(\calJ(s),\calJ(t)\bigm|w_1=a,\calJ(1)=b\br)\,,
\]
where
\[
 \calJ(v)\DF\int_0^1(v-u)^+\,dw_u\equiv\int_0^v(v-u)\,dw_u\equiv\int_0^vw_u\,du
\]
and $w_s$, $s\in[0,1]$ is the standard Brownian motion (Wiener process). Using Lemma~\ref{lem:Qmatrix} and the classical property of conditional multivariate Gaussian distributions, we deduce that the mean of the limiting process equals
\[
\begin{split}
\sfE\bl(\calJ(t)\bigm|w_1=a,\calJ(1)=b\br)
&=\begin{pmatrix}f(t)&g(t,1)\end{pmatrix}
\begin{pmatrix}1&\frac12\\[1ex]\frac12&\frac13\end{pmatrix}^{-1}
\begin{pmatrix} a\\b\end{pmatrix}
\\[1ex]
&=t^2(t-1)a+t^2(3-2t)b
\end{split}
\]
and its covariance matrix is
\[
\begin{split}
\begin{pmatrix}h(s,s)&h(s,t)\\h(s,t)&h(t,t)\end{pmatrix}
&=\begin{pmatrix}g(s,s)&g(s,t)\\g(s,t)&g(t,t)\end{pmatrix}
\\
&-\begin{pmatrix}f(s)&g(s,1)\\f(t)&g(t,1)\end{pmatrix}
\begin{pmatrix}1&\frac12\\[1ex]\frac12&\frac13\end{pmatrix}^{-1}
\begin{pmatrix}f(s)&f(t)\\g(s,1)&g(t,1)\end{pmatrix}
\end{split}
\]
with
\[
 h(s,t)=\frac{s^2(1-t)^2}6\bl[2t(1-s)+t-s\br]\,,\quad 0\le s\le t\le1\,.
\]
\end{proof}

It remains to prove tightness of the sequence of probability distributions of the processes $\theN(\cdot)$ in the space $\bfC[0,1]$ of continuous functions on $[0,1]$. To this end it is sufficient (\cite[Thm~9.2.2]{iiGavS69}) to show that for some positive $C$ and $\gamma>1$ the inequality 
\begin{equation}
\label{eq:theN-tightness-bound}
\sfE\bl|\theN(t)-\theN(s)\br|^2\le C|t-s|^\gamma
\end{equation}
holds uniformly in $[s,t]\subseteq[0,1]$ and all $N$ large enough.\footnote{
Actually, our argument shows that here $\gamma=2$; this is not surprising, as the trajectories of the limiting process $\theta(\cdot)$ have continuous derivatives.
}
The key to \Eqref{theN-tightness-bound} is the following result whose proof shall be postponed till the end of the section.

\begin{lemma}\label{lem:increment-tightness}\sl
 Let real sequences $a_N$, $b_N$ be such that 
\[
 \lim_{N\to\infty}\frac{a_N}{\sigma_N\sqrt{N}}=a\,,\qquad \lim_{N\to\infty}\frac{b_N}{\sigma_N\sqrt{N}}=b
\]
and the probability $\sfP(X_N=a_N,Y_N=b_N)$ be positive for all $N$ large enough. There exists a positive constant $C_1$ such that the inequality
\[
 \sfE\bl(X_k^2\bigm|X_N=a_N,Y_N=b_N\br)\le C_1 \sigma_N^2 N
\]
holds uniformly in $k=1,2,\dots,N$.
\end{lemma}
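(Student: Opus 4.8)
The plan is to write the conditional second moment as a ratio,
\[
\sfE\bl(X_k^2\bigm|X_N=a_N,Y_N=b_N\br)=\frac{\sfE\bl(X_k^2\,\one_{\calA_N}\br)}{\sfP(\calA_N)}\,,\qquad \calA_N\DF\{X_N=a_N,\,Y_N=b_N\}\,,
\]
and to control the two factors separately. For the denominator, Theorem~\ref{thm:FCLT} with $k=0$ gives $\sigma_N^2N^2\,\sfP(\calA_N)\to\sfp^0_{\sfQ}(a,b)$; since the limiting bivariate density is strictly positive (the matrix $\sfQ^0$ is nondegenerate, $\det\sfQ^0=\tfrac1{12}$), there is $c_0>0$ with $\sfP(\calA_N)\ge c_0/(\sigma_N^2N^2)$ for all $N$ large. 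It then remains to bound $\sfE(X_k^2\mid\calA_N)$ uniformly, and I would do so in the two regimes $k\le N/2$ and $k>N/2$.

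For $1\le k\le N/2$ I would condition on the first block $\eta_1,\dots,\eta_k$, on which $X_k$ is measurable. Writing $S\DF\sum_{j=k+1}^N\eta_j$ and $T\DF\frac1{N+1}\sum_{j=k+1}^N(N+1-j)\eta_j$, the pair $(S,T)$ is a function of the independent second block, and on it $\calA_N$ reads $\{S=a_N-X_k,\ T=b_N-y_1\}$, where $y_1\DF\frac1{N+1}\sum_{j=1}^k(N+1-j)\eta_j$ is fixed by the first block. Hence, by the tower property,
\[
\sfE\bl(X_k^2\,\one_{\calA_N}\br)=\sfE\Bl(X_k^2\,\sfP\bl(S=a_N-X_k,\ T=b_N-y_1\br)\Br)\le\Bl(\sup_{s_0,t_0}\sfP(S=s_0,T=t_0)\Br)\,\sfE X_k^2\,.
\]
A uniform bivariate local limit theorem for the block of length $N-k$ (whose off-line property is the one verified above) bounds the maximal local probability by the lattice co-volume times the reciprocal of $\sqrt{\det\Cov(S,T)}$; a direct computation gives $\det\Cov(S,T)\asymp(N-k)^4\sigma_N^4/N^2$, whence
\[
\sup_{s_0,t_0}\sfP(S=s_0,T=t_0)\le \frac{C}{(N-k)^2\sigma_N^2}\,.
\]
Since $\sfE X_k^2=k\sigma_N^2$ by \Eqref{XY-moments}, the previous two displays give $\sfE(X_k^2\,\one_{\calA_N})\le Ck/(N-k)^2$; dividing by the denominator bound and using $(N-k)^2\ge N^2/4$ on $k\le N/2$ yields $\sfE(X_k^2\mid\calA_N)\le C'\,k\sigma_N^2\le C'\,N\sigma_N^2$.

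For $N/2<k\le N$ I would use that on $\calA_N$ one has $X_k=a_N-(X_N-X_k)$, so that $X_k^2\le 2a_N^2+2(X_N-X_k)^2$ and therefore $\sfE(X_k^2\mid\calA_N)\le 2a_N^2+2\,\sfE\bl((X_N-X_k)^2\mid\calA_N\br)$. The time reversal $\eta_j\mapsto\eta_{N+1-j}$ turns $X_N-X_k=\sum_{j>k}\eta_j$ into the length-$(N-k)$ initial partial sum of a walk with the same increment law; since $X_N$ is invariant while $Y_N\mapsto X_N-Y_N$, the event $\calA_N$ is left unchanged (the role of $b$ being played by $a-b$, again finite). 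As $N-k<N/2$, the bound of the previous paragraph applied to the reversed walk gives $\sfE((X_N-X_k)^2\mid\calA_N)\le C'(N-k)\sigma_N^2\le C'N\sigma_N^2$; together with $a_N^2\le K^2\sigma_N^2N$ this closes the estimate for all $k$.

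The main obstacle is the uniform-in-target upper bound $\sup_{s_0,t_0}\sfP(S=s_0,T=t_0)\le C/((N-k)^2\sigma_N^2)$: this is a maximal-concentration (uniform bivariate local) estimate for the length-$(N-k)$ block, and it is precisely in order to keep it uniform that I only ever use blocks of length $\ge N/2$, pushing the complementary range $k>N/2$ onto the reversed walk. The remaining ingredients---the lower bound on $\sfP(\calA_N)$, the elementary identities, and $\sfE X_k^2=k\sigma_N^2$---are routine.
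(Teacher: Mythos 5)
Your proof is correct, but it follows a genuinely different route from the paper's. The paper splits at $k\approx\sqrt{N}$: for $k>\sqrt{N}$ it proves a CLT for the rescaled triple $\bl(X_k,X_N,Y_N\br)$ and reads off the limiting conditional Gaussian variance $\bl(1-4\kappa+6\kappa^2-3\kappa^3\br)\in[0,1]$ together with a bounded conditional mean; for $k\le\sqrt{N}$ it uses the per-increment conditional bound $\sfE\bl(\eta_j^2\bigm|X_N=a_N,Y_N=b_N\br)\le C_3\sigma_N^2$ (imported from the arguments of \cite{HrVe04}) and Cauchy--Schwarz, which only closes because $k^2\le N$ there. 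You instead split at $k=N/2$ and work entirely with point masses: the denominator lower bound $\sfP(\calA_N)\ge c_0/(\sigma_N^2N^2)$ from Theorem~\ref{thm:FCLT} with $k=0$, a block decomposition making $X_k$ independent of the constraint pair $(S,T)$ of the tail block, a maximal-concentration bound $\sup_{s_0,t_0}\sfP(S=s_0,T=t_0)\le C/\bl(\sigma_N^2(N-k)^2\br)$ (your determinant asymptotics $\det\Cov(S,T)\asymp\sigma_N^4(N-k)^4/N^2$ and the lattice co-volume $\asymp1/N$ are both correct, as is the observation that $(S,T)$ is, after a deterministic rescaling of $T$, exactly the pair $(X_M,Y_M)$ of a length-$M=N-k$ block), and time reversal $\eta_j\mapsto\eta_{N+1-j}$, under which $X_N$ is invariant and $Y_N\mapsto X_N-Y_N$, to reduce $k>N/2$ to the first case. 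What your approach buys: it avoids the external per-increment estimate from \cite{HrVe04} altogether, and it sidesteps the step the paper treats briskly, namely passing from distributional convergence of the conditioned triple to convergence of its second moments (which strictly needs a local CLT plus uniform integrability); your argument is a direct inequality between point probabilities and so needs no such justification. The one point you should make explicit is that Theorem~\ref{thm:FCLT} as stated (targets in compacts) does not literally yield your $\sup_{s_0,t_0}$ bound; instead one uses the Fourier inversion estimate $\sup\sfP\le\mathrm{covol}\cdot(2\pi)^{-2}\int|\widehat\chi|$, whose integral is controlled by precisely the Gaussian expansion \Eqref{log-barchiNk-expansion} near the origin and the off-line property \Eqref{chi-off-line} verified in the paper, uniformly in $k\le N/2$ since the block length $N-k\ge N/2$ ensures $(N-k)\sigma_N^2\to\infty$ --- you correctly identify this as the crux and arrange the blocks so it holds, so this is a presentational caveat rather than a gap.
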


The target condition \Eqref{theN-tightness-bound} is a straightforward corollary of the above lemma. Indeed, it follows from \Eqref{XYk.def}, the definition \Eqref{thetaN-process} and the lemma that for every $m=1,2,\dots,N$
\[
\begin{split}
\sfE\Bl|\theN\Bl(\frac mN\Br)-\theN\Bl(\frac{m-1}N\Br)\Br|^2
&=\frac1{\sigma_N^2 N(N+1)^2}\sfE\Bl(X_m^2\Bigm|X_N=a_N,Y_N=b_N\Br)
\\&\le\frac{C_1}{(N+1)^2}\,.
\end{split}
\]
Now, observing that for all $s$, $t$ with $0\le s\le t\le1$ we have
\[
\theN(t)-\theN(s)=\sum_{j=N_s+1}^{N_t+1}\alpha_N(j)
\Bl[\theN\Bl(\frac jN\Br)-\theN\Bl(\frac{j-1}N\Br)\Br]\,,
\]
where $\alpha_N(j)=1$ for all $j$ in the sum (with possible exception of the extreme values $j=N_s+1$ and $j=N_t+1$, for which $\alpha_N(j)\in[0,1]$), the Cauchy inequality gives
\[
\bl|\theN(t)-\theN(s)\br|^2\le\bl(N_t+1-N_s\br)\sum_{j=N_s+1}^{N_t+1}\bl|\alpha_N(j)\br|^2
\Bl|\theN\Bl(\frac jN\Br)-\theN\Bl(\frac{j-1}N\Br)\Br|^2
\]
and thus implies the target estimate \Eqref{theN-tightness-bound}:
\[
 \sfE\bl|\theN(t)-\theN(s)\br|^2\le C_1\frac{(N_t+1-N_s)^2}{(N+1)^2}\le C|t-s|^2\,,
\]
uniformly in $0\le s\le t\le1$ and all $N$ large enough.

We turn now to the proof of Lemma~\ref{lem:increment-tightness} and shall treat separately the two cases $k^2>N$ and $k^2\le N$.

\begin{proof}[of~Lemma~\ref{lem:increment-tightness}]

\noindent{\sf Case $k>\sqrt{N}$.} Let $k=k_N>\sqrt{N}$ and $k/N\to\kappa\in[0,1]$ as $N\to\infty$. Then for the vector
\[
 U^k\DF\Bl(\frac1{\sigma_N\sqrt{k}}\,X_k,\frac1{\sigma_N\sqrt{N}}\,X_N,\frac1{\sigma_N\sqrt{N}}\,Y_N\Br)
\]
the central limit theorem holds. Indeed, by a straightforward computation we deduce that the characteristic function of $U^k$ satisfies
\[
\begin{split}
\lim_{N\to\infty}&\log\sfE\exp\Bl\{i\Bl(\frac{v_0}{\sigma_N\sqrt{k}}\,X_k+\frac{v_1}{\sigma_N\sqrt{N}}\,X_N+\frac{v_2}{\sigma_N\sqrt{N}}\,Y_N\Br)\Br\}
\\
&=-\frac12\Bl(v_0^2+2v_0v_1\sqrt\kappa+v_1^2+2v_0v_2\sqrt\kappa\Bl(1-\frac\kappa2\Br)+v_1v_2+\frac{v_2^2}3\Br)\,.
\end{split}
\]
As the variance of the limiting conditional distribution is
\[
 \bl(1-4\kappa+6\kappa^2-3\kappa^3\br)\in[0,1]
\]
and its mean is bounded,%
\footnote{
being a linear combination of the constraints $a$ and $b$ (with $\kappa$-dependent coefficients);
}
we deduce that for some $C_2>0$ 
\[
\sfE\Bl(\Bl(\frac1{\sigma_N\sqrt{k}}\,X_k\Br)^2\Bigm|X_N=a_N,Y_N=b_N\Br)\le C_2
\]
uniformly in $k$ under consideration.

\noindent{\sf Case $k\le\sqrt{N}$.} Using arguments similar to those in \cite[pg.~257]{HrVe04}, we deduce that for all $j=1,2,\dots,N$
\[
 \sfE\Bl(\bl(\eta_j\br)^2\Bigm|X_N=a_N,Y_N=b_N\Br)\le C_3\sigma_N^2
\]
(in fact, as explained in \cite{HrVe04} for large $N$ the LHS is close to $\sfE\eta_j^2=\sigma^2_N$). As a result, the Cauchy inequality implies
\[
\begin{split}
\sfE\Bl(\bl(X_k\br)^2\Bigm|X_n=a_N,Y_N=b_N\Br)
&\le k^2\max\sfE\Bl(\bl(\eta_j\br)^2\Bigm|X_N=a_N,Y_N=b_N\Br)
\\
&\le C_3\sigma_N^2 k^2\le C_3 \sigma_N^2 N\,.
\end{split}
\]
The proof of Lemma~\ref{lem:increment-tightness} is finished.
\end{proof}

\subsection{Large deviation regime}

By combining the arguments above with the approach of \cite{rDoH96}, one can also describe the large deviation behaviour of semiflexible polymers. As such generalization is straightforward, we only mention some results.

Let $L_N(h)$ denote the log moment generating function of the step distribution (recall \Eqref{Ham}),
\begin{equation}
\label{eq:LMGF.def}
L_N(h)\DF\log\sfE\exp\bl\{h\eta\br\}\equiv
\log\frac{\int e^{-\frac cN\Phi(x)+hx}\,dx}{\int e^{-\frac cN\Phi(x)}\,dx}\,;
\end{equation} 
we shall assume that
\begin{equation}
\label{eq:LN-moments}
L_N'(0)=0\,,\qquad L_N''(0)=\sigma_N^2\in(0,+\infty)\,,
\end{equation}
that $L_N(\,\cdot\,)$ is finite in some (in general, $\eps$-dependent) neighbourhood of the origin, and that $L_N(\cdot)$ behaves properly under rescaling:
\begin{equation}
\label{eq:LN-scaling-limit}
L_N\Bl(\frac h{\sigma_N}\Br)\to L(h)\qquad\text{ as $N\to\infty$,}
\end{equation}
where $L(\cdot)$ is a strictly convex function in some $h$-neighbourhood of the origin. E.g., for the Gaussian case $\Phi(x)=\kappa x^2/2$ we obviously have
\[
 L_N(h)=\frac{N}{2c\kappa}\, h^2\qquad\text{ and }\qquad L(h)=\frac{1}{2}\, h^2\,.
\]

\subsubsection{Probability of the right-end boundary condition}
Let $X_m$, $Y_m$ be as defined in \Eqref{XYk.def}, 
\[
 X_m=\sum_{j=1}^m\eta_j\,,\qquad Y_m=\frac1{N+1}\sum_{j=1}^N\bl(m+1-j\br)^+\eta_j\,,
\]
and let $\sfP$ denote the probability distribution of the RW with steps $\eta_j$; we shall assume that the assumptions \Eqref{LMGF.def}--\Eqref{LN-scaling-limit} hold. Then the probability of the right-end boundary conditions
given the left-end ones (essentially of finishing a ``droplet'' at time $N$ with gradient $-\xiR$) is
\[
\begin{split}
&\sfP\bl(\bvfi\in\calINxi a\mid\vfi_0=0,\nabla\vfi_1=\xiL\br)
\\&\hphantom{\bfP_n\bl(\bvfi\in\calINxi a\mid}
\equiv\sfP\bl(\vfi_{N+1}=a(N+1),\nabla\vfi_{N+1}=-\xiR
\mid\vfi_0=0,\nabla\vfi_1=\xiL\br)
\end{split}
\]
and, in view of the relation
\begin{equation}
\label{eq:vfi-Y.relation}
\vfi_{[tN]+1}\equiv([tN]+1)\xi_1+(N+1)\eps\,Y_{[tN]}\,,
\end{equation}
clearly, coincides with the LD-type probability
\[
\sfP\Bl(X_N=-\eps^{-1}(\xiR+\xiL),Y_N=\eps^{-1}(a-\xiL)\Br)\,.
\]
Its limiting behaviour is well known (see, \eg, \cite[Theorem~4.2]{rDoH96}), so we just recall the corresponding result:

For real numbers $u$ and $v$, denote
\[
L_N(u,v)\DF\log\sfE\exp\Bl\{\frac u{\sigma_N}X_N+\frac v{\sigma_N}Y_N\Br\}\,;
\]
then, as $N\to\infty$, we have
\begin{equation}
\label{eq:LNoverN2int}
N^{-1}L_N(u,v)\to L_\infty(u,v)\DF\int_0^1L\bl(u+(1-x)v\br)\,dx
\end{equation}
with $L(\,\cdot\,)$ from \Eqref{LMGF.def}. The optimal tilts $u^*$, $v^*$ can be determined from the conditions (cf.~\cite[Eq.(2.26)]{rDoH96})
\begin{equation}
\label{eq:BC.tilts}
\left\{
\begin{aligned}
&\int_0^1L'(u+yv)\,dy=-\frac{\xiR+\xiL}c\,,
\\
&\int_0^1yL'(u+yv)\,dy=-\frac{\xiL-a}c\,,
\end{aligned}
\right.
\end{equation}
where we use the fact that $\vfi_N/N\to a$ as $N\to \infty$ in such a way that $N\eps\to c$, the
macroscopic length of the excursion under consideration.
Then the sharp LD asymptotics for the probability of interest, up
to a factor of $(1+o(1))$, is
\begin{equation}
\label{eq:LDP}
\frac1{2\pi N^2\sqrt{\|D(u^*,v^*)\|}}
\exp\Bl\{-N\Bl(-\frac{\xiR+\xiL}cu^*-\frac{\xiL-a}cv^*
-L_\infty(u^*,v^*)\Br)\Br\}\,,
\end{equation}
where $D(u,v)$ stands for the Hessian of $L_\infty$ as the function of $u$, $v$. Clearly, the expression in the exponential is just the convex dual $L^*_\infty$ of $L_\infty$ evaluated at the point with coordinates as in the RHS of \Eqref{BC.tilts}.

A straightforward computation in the Gaussian case $\Phi(x)=\kappa x^2/2$ based upon the correspondence \Eqref{vfi-boundary-fclt}--\Eqref{XY-boundary-fclt} as well as the moments \Eqref{XY-moments}--\Eqref{Z-Cov} gives the following {\bf exact} analogue of \Eqref{LDP} for $a=0$ and $N>1$:
\[
\label{eq:exact.Gauss}
\frac\kappa{2\pi N^2}\sqrt{\frac{12(N+1)}{N-1}}
\exp\Bl\{-\frac{(2N+1)\xiL^2-2(N+2)\xiL\xiR+(2N+1)\xiR^2}{c(N-1)/(N\kappa)}\Br\}\,.
\]

\subsubsection{Mean profile}

To catch the mean profile, fix a real $t$, $0<t<1$, and consider the
vector
\[
Z_N^t\DF\bl(X_N,Y_N,Y_{[tN]}\br)\,.
\]
Since according to the relation \Eqref{vfi-eta.relation} we have
the conditional distribution of $\vfi_{[tN]+1}$ given $(X_N,Y_N)$ can be
directly derived from the local limit theorem for the vector $Z_N^t$.

Mimicking \cite{rDoH96}, we introduce the log moment generating function
$L_N^t(u,v,w)$ of the vector $Z_N^t$,
\[
L_N^t(u,v,w)\DF\log\sfE\exp\Bl\{\frac u{\sigma_N}X_N+\frac v{\sigma_N}Y_N+\frac w{\sigma_N}Y_{[tN]}\Br\}\,,
\]
and observe that the conditional mean value of the last component
$Y_{[tN]}$ of $Z_N^t$ given the value of the first two is, up to a small correction,
as
\[
\sfE\bl(Y_{[tN]}\mid X_N=x_N,Y_N=y_N\br)\approx\sigma_N\frac\partial{\partial w}L_N^t(u^*,v^*,w)\biggm|_{w=0}
\]
with the optimal values $u^*$, $v^*$ obtained through an analogue of \Eqref{BC.tilts},
\[
\Bigl(\frac\partial{\partial u}L_N^t(u,v,w),
\frac\partial{\partial v}L_N^t(u,v,w)\Bigr)\biggm|_{(u^*,v^*,0)}
=(x_N,y_N)\,.
\]
Observing that (where for a real $x$ we write $x^+\DF\max(x,0)$)
\[
L_N^t(u,v,w)\equiv
\sum_{j=1}^NL\Bl(\frac u{\sigma_N}+\frac{N+1-j}{N+1}\,\frac v{\sigma_N}+\Bl(\frac{[tN]+1-j}{N+1}\Br)^+\,\frac w{\sigma_N}\Br)\,,
\]
we immediately obtain, up to a small correction,
\[
\frac1N
\frac\partial{\partial w}L_N^t(u^*,v^*,w)\biggm|_{w=0}
\approx\int_0^1(t-x)^+L'\bl(u^*+(1-x)v^*\br)\,dx\,,
\]
and thus the (conditional) mean value of the macroscopic polymer at ``time'' $t$ is  (recall \Eqref{vfi-Y.relation}) 
\begin{equation}
\label{eq:mean.profile}
t\xiL+c\int_0^t(t-x)L'\bl(u^*+(1-x)v^*\br)\,dx\,.
\end{equation}
In particular, in the Gaussian case $\Phi(x)=\kappa x^2/2$, the mean rescaled profile \Eqref{mean.profile} becomes
\[
t^2(1-t)\xiR+t(1-t)^2\xiL\,.
\]

It is instructive to compare the previous results to their analogues for the interfaces. Of course, the non-trivial geometry of the mean profile as well as anomalous $\bfC^1$-smoothness of the trajectories (recall the comment to \Eqref{theN-tightness-bound} above) are due to the nature of semiflexible interaction and are not present for interfaces.

\section{Free energy of a confined polymer}
As an application of the above estimates, we turn now to a problem that has often been studied in the physics literature (see, \eg, \cite{YBG07} and references therein): Determine the free energy (per unit of macroscopic length) of a semiflexible polymer constrained to lie inside a tube of given radius. From the mathematical point of view, this is equivalent to studying the logarithmic asymptotics of the probability of the event $\bl\{\sup_{0\leq k\leq N+1}|\varphi_k| \leq \rho\br\}$, when $N$ is large enough.

Using the functional CLT, it would be sufficient to prove the corresponding claim for the limiting Gaussian process. This so-called small ball problem has been studied for the integrated Brownian motion in~\cite{KhSh98}. We are going to give a completely different proof, in the spirit of~\cite{HrVe04}, which is easy and more robust, and also holds for positive values of $\eps$.
\begin{theorem}\sl
Let $c>0$ be the macroscopic length of the polymer. There exist constants $\rho_0=\rho_0(c)>0$, $C_1>0$, $C_2<\infty$ and $\delta>0$ such that, for all $\rho<\rho_0$,
$$
\frac{C_1}{\rho^{2/3}c^{1/3}} \leq -\frac1{c}\log\sfP_N\Bl(\sup_{1\leq k \leq N} |\varphi_k| \leq \rho \sigma_N\sqrt{N} \,|\, \varphi_0=\varphi_1=0\Br) \leq \frac{C_2}{\rho^{2/3}c^{1/3}}\,,
$$
uniformly in $\eps<\delta\rho^{2/3}c^{1/3}$.
\end{theorem}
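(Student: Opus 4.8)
The plan is to run a mesoscopic block (coarse-graining / renewal) argument in the spirit of~\cite{HrVe04}, which avoids the sharp small-ball asymptotics for integrated Brownian motion. Everything hinges on identifying the correct correlation length $\ell$ of a polymer confined to a tube of radius $R\DF\rho\sigma_N\sqrt{N}$. Recall from the moment formulas \Eqref{XY-moments} that, started from $\xi=\vfi=0$, the free fluctuation of the height over a window of $\ell$ steps is $\sqrt{\Var(\vfi_\ell)}\asymp\eps\sigma_N\ell^{3/2}$. Balancing this against the tube radius $R$, i.e.\ $\eps\sigma_N\ell^{3/2}\asymp\rho\sigma_N\sqrt{N}$, gives $\ell\asymp(\rho/c)^{2/3}N\asymp\rho^{2/3}c^{1/3}/\eps$ and a number of blocks $M\DF N/\ell\asymp(c/\rho)^{2/3}$. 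The hypothesis $\eps<\delta\rho^{2/3}c^{1/3}$ is exactly what makes $\ell\gtrsim1/\delta$ large; since $\rho,c$ are fixed and $N\sigma_N^2\to\infty$, we also have $\ell\sigma_N^2\to\infty$, so the (local) CLT of Theorem~\ref{thm:FCLT} and the invariance principle of Theorem~\ref{thm:theta-fclt} apply on each single block. Note already that $-\tfrac1c\log\sfP\asymp M/c\asymp\rho^{-2/3}c^{-1/3}$, matching the target.

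For the \emph{lower bound on the free energy} (upper bound on $\sfP$), I use that $(\vfi_k,\xi_k)$ is a Markov chain (an integrated random walk, $\xi_{k+1}=\xi_k+\eps\eta_k$, $\vfi_{k+1}=\vfi_k+\xi_{k+1}$). The first step is a single-block escape estimate, uniform in the entry data: there is $q<1$ so that for every $\xi\in\BbbR$ and every $|\vfi|\le R$,
\[
\sfP\bl(\textstyle\sup_{0\le k\le\ell}|\vfi_k|\le R \bigm| \vfi_0=\vfi,\xi_0=\xi\br)\le q\,.
\]
If $|\xi|$ is large the drift $\vfi_k\approx\vfi+k\xi$ alone pushes $\vfi$ out of the tube over $\ell$ steps, so the probability is tiny; for moderate $\xi$ the accumulated fluctuation of the integrated walk is, by our choice of $\ell$, comparable to $R$, hence there is an order-one chance of exiting. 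Given this, a routine conditioning on successive block boundaries (peel off the last block, bound its conditional probability by $q$ using that on the earlier blocks $\vfi$ has stayed in the tube and that the bound is uniform in the entry gradient, then iterate) yields $\sfP(\sup_{1\le k\le N}|\vfi_k|\le R)\le q^{M}$, whence $-\tfrac1c\log\sfP\ge\tfrac1c M\log(1/q)\ge C_1\rho^{-2/3}c^{-1/3}$.

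For the \emph{upper bound on the free energy} (lower bound on $\sfP$), I force the polymer through a narrow ``gate'' at each block boundary. Let $G$ be a good set of entry data of the form $\{|\vfi|\le R/2,\ |\xi|\le R/(2\ell)\}$, so that the drift over one block keeps $\vfi$ well inside the tube. The complementary single-block estimate is a uniform lower bound: there is $p>0$ such that for every $(\vfi,\xi)\in G$,
\[
\sfP\bl(\textstyle\sup_{0\le k\le\ell}|\vfi_k|\le R,\ (\vfi_\ell,\xi_\ell)\in G \bigm| \vfi_0=\vfi,\xi_0=\xi\br)\ge p\,.
\]
Because $\ell$ was tuned so that $R$ is of the order of the block fluctuation, this is an \emph{order-one} event on the block scale; rescaling the block path, the invariance principle (Theorem~\ref{thm:theta-fclt} and Remark~\ref{rem:theta-fclt}(2)) shows it converges to the event that a pair (Brownian motion, its integral) stays in a fixed tube and ends with both coordinates small, which has strictly positive probability by support properties — so no sharp small-ball input is needed. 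Chaining these lower bounds through the $M$ gates by the Markov property gives $\sfP\ge p^{M}$, hence $-\tfrac1c\log\sfP\le\tfrac1c M\log(1/p)\le C_2\rho^{-2/3}c^{-1/3}$.

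The main obstacle is establishing the two \emph{uniform} single-block inputs, the escape bound $q<1$ and the survival-through-a-gate bound $p>0$, uniformly in $N$ and in the entry gradient $\xi$. The delicate points are: (i) making the CLT/local-CLT estimates uniform over $\xi$, in particular controlling large entry gradients, where the Gaussian approximation degrades but the escape is anyway overwhelming, so that the uniform constant $q$ is genuinely $<1$; and (ii) checking that under $\eps<\delta\rho^{2/3}c^{1/3}$ and $N\sigma_N^2\to\infty$ each block satisfies $\ell\sigma_N^2\to\infty$ with $\ell$ large, so that Theorem~\ref{thm:FCLT} applies block-by-block with a remainder uniform across the $M$ blocks. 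Once these one-block estimates are in hand, the coarse-graining via the Markov property is entirely routine.
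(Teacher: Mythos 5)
Your proposal is correct and follows essentially the same route as the paper's proof: the same block length $D\asymp\rho^{2/3}c^{1/3}\eps^{-1}$ obtained by balancing the one-block fluctuation $\eps\sigma_N D^{3/2}$ against the tube radius $R=\rho\sigma_N\sqrt{N}$, the same Markov-property chaining through gates $\{|\varphi|\le\nu R,\ |\xi|\le\nu R/D\}$ for the lower bound on the probability, and the same two-case treatment of the entry gradient (CLT for moderate $|\xi_1|$, mean drift plus Chebyshev for large $|\xi_1|$) for the upper bound. The only cosmetic difference is that you fold the paper's two-step lower bound (first the probability of the gate event $\calA$ via the CLT for $(X_D,Y_D)$, then the conditional sup bound via the functional CLT and Fernique's inequality) into a single one-block event whose limiting positivity you get from support properties of the limiting Gaussian pair, which is an equivalent formulation of the same argument.
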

\begin{remark}\rm
\begin{enumerate}
\item The existence of the limit as $\eps\equiv c/N\to 0$ can be proved using a standard subadditivity argument, see~\cite{KhSh98}. An explicit expression for the limit seems to be unknown (although the physicists have good numerical estimates).
\item A similar result holds for other  boundary conditions,
as long as $\varphi_0$ is not chosen too close to the boundary of the tube, and $\xi_1$ is small enough. A similar remark applies for $\varphi_{N+1}$ and $\xi_{N+1}$ (which were unconstrained above). For example, the proof remains unchanged if the boundary conditions at both extremities satisfy the same constraints as demanded by the event $\calA$ in the proof.

\item Although the above expression might look superficially different from the one given by the physicists' derivations, they actually coincide. To see this, it is best to restrict attention to the case studied in the physics literature, in which the Hamiltonian is of the form $\tfrac\kappa{2\epsilon} \sum_{i=1}^N (\Delta\varphi_i)^2$, and to write down explicitly the temperature dependence. In that case, $\sigma^2_N=N/(\beta\kappa c)$, where $\beta=1/(k_{\rm B}T)$ is the inverse temperature. To match the physicists' procedure, we wish to measure the width of the tube in units set by the polymer length. The event we are interested in thus becomes
\[
\sup_{1\leq k \leq N} N^{-3/2} |\varphi_k| \leq r/c\,,
\]
where we have denoted by $r=\sqrt{c/(\beta\kappa)}\rho$ the macroscopic width of the tube. We then see that the free energy is given by $k_{\rm B}T\,  (\beta\kappa)^{-1/3} r^{-2/3}$, which agrees perfectly with the physicists' expression, since $\beta\kappa$ is the persistence length corresponding to these parameters.
\end{enumerate}
\end{remark}
\begin{proof}
\emph{Lower bound on the probability.}
We write
\[
 R=\rho\sigma_N\sqrt{N}\qquad\text{ and }\qquad D=[ \eps^{-2/3} \sigma_N^{-2/3} R^{2/3} ] = [\rho^{2/3} c^{1/3} \eps^{-1}]\,.
\]
Let also $\nu>0$ be a small number (to be chosen below) and denote by $\mathcal{A}$ the event that
\begin{itemize}
\item $|\varphi_{kD+1}| \leq \nu R$, for all $1\leq k\leq [ N/D ]$;
\item $|\xi_{kD+1}| \leq \nu R/D$, for all $1\leq k\leq [ N/D ]$.
\end{itemize}
We then have the lower bound
\begin{multline*}
\sfP_N\bl(\sup_{1\leq k \leq N} |\varphi_k| \leq R \,|\, \varphi_0=\varphi_1=0\br)\\
\geq \sfP_N\bl(\sup_{1\leq k \leq N} |\varphi_k| \leq R \,|\, \varphi_0=\varphi_1=0,\mathcal{A}\br)\, \sfP_N(\mathcal{A} \,|\, \varphi_0=\varphi_1=0).
\end{multline*}
Let us first find a lower bound for $\sfP_N(\mathcal{A}\,|\, \varphi_0=\varphi_1=0)$. Conditioning on the pairs $\varphi_{kD},\varphi_{kD+1}$, $1\leq k\leq [ N/D]$ (compatible with the event $\mathcal{A}$), the Markov property implies that it is sufficient to consider what happens in a single piece $\{(k-1)D,\ldots,kD+1\}$. Namely, for $|a_0|\leq\nu$ and $|g_0|\leq \nu$, it is enough to prove that
$$
\sfP_D\bl(|\varphi_{D+1}| \leq \nu R, |\xi_{D+1}| \leq \nu R/D \,|\, \varphi_0=a_0 R, \xi_1=g_0 R/D\br)
$$
is bounded away from zero, uniformly in $c,\rho$ and $\eps$.
Rewriting this event in terms of the random variables $X_D$ and $Y_D$ yields
\begin{align*}
\sfP_D \Bigl( \frac 1{\sigma_N\sqrt{D+1}}\, Y_D \in& \Bl[ -\frac{(\nu+a_0+g_0)R}{\eps D\sigma_N\sqrt{D+1}}, \frac{(\nu-a_0-g_0)R}{\eps D\sigma_N\sqrt{D+1}} \Br],\\
\frac 1{\sigma_N\sqrt{D+1}}\,& X_D \in \Bl[ -\frac{(\nu+g_0)R}{\eps D\sigma_N\sqrt{D+1}}, \frac{(\nu-g_0)R}{\eps D\sigma_N\sqrt{D+1}} \Br] \Bigr).
\end{align*}
Since
\begin{equation}
\label{equ_asympt}
\frac R{\eps D\sigma_N\sqrt{D+1}} = 1+o(1),\qquad\text{as $D\to\infty$,}
\end{equation}
the Central Limit Theorem~\ref{thm:CLT} implies that the above probability converges, as $D\to\infty$, to
\begin{multline*}
\sfP\Bigl( Z_1 \in \bigl[ -(\nu+a_0+g_0), \nu-a_0-g_0 \bigr], Z_0 \in \bigl[ -(\nu+g_0), \nu-g_0 \bigr] \Bigr)\\
\geq \sfP\Bigl( Z_1 \in \bigl[ -3\nu, -\nu \bigr], Z_0 \in \bigl[ -2\nu, 0 \bigr] \Bigr),
\end{multline*}
where $(Z_0,Z_1)$ is a Gaussian vector with zero mean and covariance matrix
\[
 \sfQ^0=\begin{pmatrix} 1 & 1/2 \\ 1/2 & 1/3\end{pmatrix}\,.
\]
This probability being bounded away from zero, uniformly in $\eps$, $\rho$ and $c$, we conclude that
$$
\sfP_N(\mathcal{A}\,|\, \varphi_0=\varphi_1=0) \geq e^{-C\, N/D} = e^{-C\,\rho^{-2/3}c^{2/3}},
$$
uniformly in $\eps,\rho,c$ such that $D\approx \rho^{2/3} c^{1/3} \eps^{-1}$ is sufficiently large.

Let us now turn to the derivation of a lower bound on
$$
\sfP_N\bl(\sup_{1\leq k \leq N} |\varphi_k| \leq R \,|\, \varphi_0=\varphi_1=0,\mathcal{A}\br).
$$
For $|a_0|,|a_{D+1}|\leq\nu$ and $|g_1|,|g_{D+1}|\leq \nu$, let us introduce the event
\begin{multline*}
\calB = \calB(a_0,a_{D+1},g_1,g_{D+1}) = \Bl\{\varphi_0=a_0 R,\varphi_{D+1}=a_{D+1} R,\\\xi_1=g_1 R/D,\xi_{D+1}=g_{D+1}R/D\Br\}.
\end{multline*}
Changing to the $X,Y$ variables yields,
$$
\sfP_D\bigl(\sup_{1\leq k \leq D} |\varphi_k| \geq R \,\bigm|\, \calB \bigr)
\leq
\sfP\Bl(\sup_{1\leq k \leq D} \frac{|Y_k|}{\sigma_N\sqrt{D+1}} \geq \frac{(1-2\nu)R}{\eps D\sigma_N\sqrt{D+1}} \,\bigm|\,  \calB\Br).
$$
Fixing some $\nu<\tfrac14$, the functional CLT and~\eqref{equ_asympt} then imply that, for all $D$ large enough, the latter probability is bounded above by
$$
\sfP\Bigl(\sup_{t\in[0,1]} |\theta(t)| \geq \tfrac 13\Bigr),
$$
where $\theta(t)$ is the Gaussian process characterized by~\eqref{eq:theta-moments} with $a=g_{D+1}-g_1$ and $b=a_{D+1}-a_0-g_1$. An application of Fernique's inequality~\cite{Fe64} shows that this probability is bounded above uniformly in $a_0,g_1,a_{D+1},g_{D+1}$ in the range considered.
The Markov property then implies that
$$
\sfP_N\Bl(\sup_{1\leq k \leq N} |\varphi_k| \leq R \,|\, \varphi_0=\varphi_1=0,\mathcal{A}\Br) \geq e^{-C\,\rho^{-2/3}c^{2/3}},
$$
uniformly in $\eps,c,\rho$ such that $D\approx \rho^{2/3} c^{1/3} \eps^{-1}$ is sufficiently large. This completes the proof of the lower bound.

\bigskip
\emph{Upper bound on the probability.}
As for the lower bound, we partition the tube into disjoint pieces of length $D=[\eps^{-2/3}\sigma_N^{-2/3}R^{2/3}]$. We then write
\begin{multline*}
\sfP_N\bl(\sup_{1\leq k \leq N} |\varphi_k| \leq R \,|\, \varphi_0=\varphi_1=0\br)\\
\le
\prod_{i=1}^{[ N/D ]} \sfP_N\Bl(\!\sup_{(i-1)D+2\leq k \leq iD+1}\! |\varphi_k| \leq R \,\bigm|\, \varphi_0=\varphi_1=0,\!\sup_{2\leq k \leq (i-1)D+1}\! |\varphi_k| \leq R\Br).
\end{multline*}
(If $N/D$ is not an integer, we simply bound the contribution of the last, shorter, piece by $1$.)
We are going to show that each of the remaining terms in the product is bounded away from $1$, uniformly in $\eps,c,\rho$, provided $D$ is large enough. The conclusion will then clearly follow.

Using once more the Markov property, we see that it suffices to bound
\begin{align*}
\sup_{\substack{|a_0|\leq 1\\g_1}} \sfP_{D}\bl(\sup_{2\leq k \leq D+1} &|\varphi_k| \leq R \,|\, \varphi_0=a_0 R, \xi_1=g_1 R/D\br)\\
&=
1-\inf_{\substack{|a_0|\leq 1\\g_1}} \sfP_{D}\bl(\sup_{2\leq k \leq D+1} |\varphi_k| > R \,|\, \varphi_0=a_0 R, \xi_1=g_1 R/D\br)\\
&\leq
1-\inf_{\substack{|a_0|\leq 1\\g_1}} \sfP_{D}\bl(|\varphi_{D}| > R \,|\, \varphi_0=a_0 R, \xi_1=g_1 R/D\br).
\end{align*}
We shall now separately deal with the cases $|g_1|\leq M$ and $|g_1|> M$, where $M$ is some large enough number which will be chosen below.

First,
$$
\inf_{\substack{|a_0|\leq 1\\|g_1|\leq M}} \sfP_{D}\bl(|\varphi_{D}| > R \,|\, \varphi_0=a_0 R, \xi_1=g_1 R/D\br)
$$
can be bounded below by
$$
\sfP\Bl(\frac{|Y_{D}|}{\sqrt{\sigma_N^2 (D+1)}} > \frac{(M+2) R}{\eps D \sigma_N\sqrt{D+1}}\Br),
$$
and the Central Limit Theorem~\ref{thm:CLT} and~\eqref{equ_asympt} imply that the latter converges, as $D\to\infty$, to $\sfP(|Z_1| > M+2)$, which is bounded away from $0$ by a constant depending only on $M$.

Second, straightforward computations similar to those done in the proof of the CLT yield
\begin{align*}
\sfE_{D} \bigl(\varphi_{D} \,\bigm|\, \varphi_0=a_0 R, \xi_1=g_1 R/D \bigr)
&=
(a_0 + g_1 + o(1)) R,\\
\Var_{D}\bigl(\varphi_{D} \,\bigm|\, \varphi_0=a_0 R, \xi_1=g_1 R/D \bigr) &= \tfrac16 (D+1)D(2D+1) \sigma_N^2 \eps^2.
\end{align*}
We conclude that, when $|a_0|\leq 1$ and $|g_1|>M$,
$$
\bigl| \sfE_{D} \bigl(\varphi_{D} \,\bigm|\, \varphi_0=a_0 R, \xi_1=g_1 R/D \bigr) \bigr| \geq (M-2) R.
$$
Let us write $\psi_D=\varphi_D-\sfE_{D} \bigl(\varphi_{D} \,\bigm|\, \varphi_0=a_0 R, \xi_1=g_1 R/D \bigr)$. Chebychev's inequality implies that
\begin{align*}
\inf_{\substack{|a_0|\leq 1\\|g_1|>M}} \sfP_{D}&(|\varphi_{D}| \leq R \,|\, \varphi_0=a_0 R, \xi_1=g_1 R/D)\\
&\leq
\inf_{\substack{|a_0|\leq 1\\|g_1|>M}} \sfP_{D}(|\psi_{D} | \geq (M-3) R \,|\, \varphi_0=a_0 R, \xi_1=g_1 R/D)\\
&\leq
\frac{1}{3(M-3)^2},
\end{align*}
and the latter is smaller than $1/3$, provided $M\geq 4$.
\end{proof}

\appendix
\section{Heuristic derivation of the model}\label{sec:heuristic}

We wish to construct a discretized version of the {\it worm-like chain} model from Sect.~\ref{sec:model}. Given a positive $\eps$, we associate to the macroscopic polymer profile $f$ the discretized polymer configuration
\[
 \vfi_k\DF\eps^{-\gamma}f(k\eps);
\]
\ie, we discretize the polymer horizontally with step $\eps$ and vertically with step $\eps^\gamma$, where the (yet unknown) parameter $\gamma$ has to be determined. In order to determine $\gamma$, we proceed as follows. To each polymer configuration $\bvfi=(\vfi_0=0,\ldots,\vfi_{N+1})$, with $N=[c/\eps]$, we associate the energy
\begin{equation}
\label{eq:HamUndef}
\calH_N(\bvfi)\DF\eps\sum_{j=1}^{N}\Phi\bl(\eps^{-\delta}\Delta\vfi_j\br)\,.
\end{equation}
For a smooth profile $f$ we then have
\[
 \Delta\vfi_k\approx\eps^{2-\gamma} f''(k\eps),
\]
so that the macroscopic expression for the energy is recovered, in the limit $\eps\to 0$,
\begin{equation}
\label{eq:Hamiltonian-limit}
 \calH_{[c/\eps]}(\bvfi)\approx\eps\sum_{j=1}^{[c/\eps]}\Phi\bl(f''(j\eps)\br)\approx\int_0^c\Phi\bl(f''(x)\br)\,dx\,,
\end{equation}
provided the relation
\begin{equation}
\label{eq:scaling-relation}
\gamma+\delta=2\
\end{equation}
is verified.
The above computation holds for all $\gamma$, $\delta>0$ satisfying \Eqref{scaling-relation}. Here, we choose $\gamma=\delta=1$, so that for a sufficiently smooth profile $f(\cdot)$ we have $\nabla\vfi_k\approx f'(\eps k)$, \ie, the macroscopic and microscopic gradients coincide.
As shown in Sect.~\ref{sec:scaling}, for the class of models considered in the present paper this scaling results in both the vertical fluctuations and the end-to-end gradient-gradient fluctuations for such polymers being macroscopic. This, in particular, implies that the persistence length and the polymer length are of the same order.


\begin{thebibliography}{00}

\bibitem{twB93} T.~W. Burkhardt.
Semiflexible polymer in the half plane and statistics of the integral of a Brownian curve.
\textit{J. Phys. A: Math. Gen.} \textbf{26} (1993), L1157--L1162.

\bibitem{CaDe-I} F. Caravenna and J.-D. Deuschel.
Pinning and wetting transition for (1+1)-dimensional fields with Laplacian interaction.
\textit{Annals of Probability} (to appear).

\bibitem{CaDe-II} F. Caravenna and J.-D. Deuschel.
Scaling limits of $(1+1)$-dimensional pinning models with Laplacian interaction.
\textit{To appear in the Annals of Probability}, arXiv:0802.3154.

\bibitem{rDoH96} R. Dobrushin and O. Hryniv.
Fluctuations of shapes of large areas under paths of random walks.
\textit{Probab. Theory Related Fields} \textbf{105} (1996), no. 4, 423--458.

\bibitem{wF57} W. Feller.
An introduction to probability theory and its applications.
Vol.~I. 2nd ed. Wiley, 1957.

\bibitem{Fe64} X. Fernique.
Continuit\'e des processus Gaussiens.
\textit{C. R. Acad. Sci. Paris} \textbf{258} (1964), 6058--6060.

\bibitem{iiGavS69} I. I. Gikhman and A. V. Skorokhod.
Introduction to the theory of random processes.
Saunders, 1969.

\bibitem{HrVe04} O.~Hryniv and Y.~Velenik.
Universality of critical behavior in a class of recurrent random walks.
\textit{Probab. Theory Related Fields} \textbf{130} (2004), no. 2, 222--258.

\bibitem{HrVe_PartII} O.~Hryniv and Y.~Velenik.
Some Rigorous Results on Semiflexible Polymers II. Pinning transition.
\textit{In preparation} (2008).

\bibitem{KhSh98} D. Khoshnevisan and Z. Shi.
Chung's law for integrated Brownian motion.
\textit{Trans. Amer. Math. Soc.} \textbf{350}  (1998),  no. 10, 4253--4264.

\bibitem{K08} N. Kurt.
Maximum and entropic repulsion for a Gaussian membrane model in the critical dimension.
\textit{To appear in the Annals of Probability}, arXiv:0801.0551.

\bibitem{P75}
V.V.~Petrov. {\sl Sums of independent random variables}.  Ergebnisse der Mathematik und ihrer Grenzgebiete, Band 82. Springer, 1975. 

\bibitem{YBG07} Y. Yang, T.~W. Burkhardt and G. Gompper.
Free Energy and Extension of a Semiflexible Polymer in Cylindrical Confining Geometries.
\textit{Phys. Rev. E} \textbf{76} (2007), 011804.

\end{thebibliography}
\end{document}